%------------------------------------------------------------------------------------%
%------------------------------------------------------------------------------------%

\documentclass[12pt,twoside,reqno]{amsart}
%%%%%%%%%%%%%%%%%%%%%%%%%%%%%%%%%%%%%%%%%%%%%%%%%%%%%%%%%%%%%%%%%%%%%%%%%%%%%%%%%%%%%%%%%%%%%%%%%%%%%%%%%%%%%%%%%%%%%%%%%%%%%%%%%%%%%%%%%%%%%%%%%%%%%%%%%%%%%%%%%%%%%%%%%%%%%%%%%%%%%%%%%%%%%%%%%%%%%%%%%%%%%%%%%%%%%%%%%%%%%%%%%%%%%%%%%%%%%%%%%%%%%%%%%%%%
\usepackage[colorlinks=false,citecolor=blue]{hyperref}
\usepackage{mathptmx, amsmath, amssymb, amsfonts, amsthm, mathptmx, enumerate, color,mathrsfs}
\usepackage{graphicx}
\usepackage{epstopdf}

\setcounter{MaxMatrixCols}{10}
%TCIDATA{OutputFilter=Latex.dll}
%TCIDATA{Version=5.50.0.2953}
%TCIDATA{<META NAME="SaveForMode" CONTENT="1">}
%TCIDATA{BibliographyScheme=Manual}
%TCIDATA{LastRevised=Monday, July 14, 2025 14:22:13}
%TCIDATA{<META NAME="GraphicsSave" CONTENT="32">}

\linespread{1.05}
\setlength{\textheight}{23cm}
\setlength{\textwidth}{16cm}
\setlength{\oddsidemargin}{0cm}
\setlength{\evensidemargin}{0cm}
\setlength{\topmargin}{0cm}
\newcommand{\vertiii}[1]{{\left\vert\kern-0.25ex\left\vert\kern-0.25ex\left\vert #1 
    \right\vert\kern-0.25ex\right\vert\kern-0.25ex\right\vert}}
\newtheorem{theorem}{Theorem}[section]
\newtheorem{lemma}{Lemma}[section]
\newtheorem{remark}{Remark}[section]

\newtheorem{corollary}{Corollary}[section]

\numberwithin{equation}{section}

\begin{document}
\title{More inner product bounds with applications}
\author{Mohammad Sababheh and Hamid Reza Moradi}
\subjclass[2010]{Primary 47A12; Secondary 47A30, 47A63, 15A60}
\keywords{numerical radius, usual operator norm, Cauchy-Schwarz inequality}

\begin{abstract}
The main goal of this paper is to present new bounds for certain inner products in Hilbert spaces, with applications to the numerical radius and the operator norm. The obtained results significantly improve earlier results in this direction.
\end{abstract}

\maketitle

%------------------------------------------------------------------------------------%
\pagestyle{myheadings} \markboth{\centerline{}}
{\centerline{}} \bigskip \bigskip 
%------------------------------------------------------------------------------------%

\section{Introduction}
Let $\mathbb{H}$ be a complex Hilbert spaces, with inner product $\left<\cdot,\cdot\right>$, and induced norm $\|\cdot\|$. The set of all bounded linear operators from one Hilbert space $\mathbb{H}_1$ to another $\mathbb{H}_2$ will be denoted by $\mathbb{B}(\mathbb{H}_1,\mathbb{H}_2).$ When $\mathbb{H}_1=\mathbb{H}_2=\mathbb{H}$, we simply write $\mathbb{B}(\mathbb{H})$ instead of $\mathbb{B}(\mathbb{H},\mathbb{H})$.

Among the most basic inequalities in Hilbert spaces, and in inner product spaces in general, is the Cauchy-Schwarz inequality that asserts 
\begin{equation}\label{Eq_CS}
|\left<x,y\right>|\leq \|x\|\;\|y\|; x,y\in\mathbb{H}.
\end{equation}
This inequality has been a fundamental foundation of Hilbert space theory, with numerous applications involving it or its variants.

Among the most useful and usable variant of \eqref{Eq_CS} is Buzano inequality \cite{bu}, which  states
\begin{equation}\label{Eq_Buz}
\left| \left\langle x,z \right\rangle  \right|\left| \left\langle y,z \right\rangle  \right|\le \frac{{{\left\| z \right\|}^{2}}}{2}\left( \left| \left\langle x,y \right\rangle  \right|+\left\| x \right\|\left\| y \right\| \right),
\end{equation}
for any $x,y,z\in \mathbb{H}$.

A more elaborated version that extends \eqref{Eq_CS} is the  mixed Cauchy-Schwarz inequality, which states that if $T\in\mathbb{B}(\mathbb{H})$, and $x,y\in\mathbb{H}$, then \cite{kato}
\begin{equation}\label{eq_mixed_t}
{{\left| \left\langle Tx,y \right\rangle  \right|}^{2}}\le \left\langle {{\left| T \right|}^{2\left( 1-t \right)}}x,x \right\rangle \left\langle {{\left| {{T}^{*}} \right|}^{2t}}y,y \right\rangle; 0\leq t\leq 1.
\end{equation}
The fact that \eqref{eq_mixed_t} extends \eqref{Eq_CS} follows on taking $T=I$; the identity operator. In this context, $|T|$ refers to the absolute value operator, defined by $|T|=(T^*T)^{\frac{1}{2}},$ where $T^*$ stands for the adjoint of $T$.

It has been a common practice to associate elements of $\mathbb{B}(\mathbb{H}_1,\mathbb{H}_2)$ with certain scalars, known as norms, for the purpose of comparing operators, since there is no natural total order on the space of bounded linear operators. Given $T\in\mathbb{B}(\mathbb{H}_1,\mathbb{H}_2),$ the operator norm of $T$  is defined by $\|T\|=\sup\{\|Tx\|:x\in\mathbb{H}_1,\|x\|=1\}.$ Another interesting norm is the so-called numerical radius, which is defined on $\mathbb{B}(\mathbb{H})$, by
\[\omega(T)=\sup\{|\left<Tx,x\right>|:x\in\mathbb{H},\|x\|=1\}; T\in\mathbb{B}(\mathbb{H}).\]
It is well known that if $T\in\mathbb{B}(\mathbb{H}),$ one has the equivalence \cite[Theorem 1.3-1]{Gustafson_Book_1997}
\begin{equation}\label{Eq_Equiv}
\frac{1}{2}\|T\|\leq\omega(T)\leq \|T\|.
\end{equation}
One significance of this relation is the way it provides an interval, in terms of $\|T\|$, that contains $\omega(T)$; a quantity that is usually not easy to find, compared with $\|T\|.$ 

Numerous researchers have invested considerable effort to sharpen the bounds in \eqref{Eq_CS}, yielding various forms. The Cauchy-Schwarz inequality and its variants, as above, are unavoidable tools in this investigation.

Of particular interest, matrices, which are operators on a finite-dimensional Hilbert space, have received considerable attention. More particular, matrices with non-negative entries found an easy path because of the observation, proved in \cite{Goldberg_LAA_1975}, that if $T=[t_{ij}]_{i,j=1}^{n}$ is an $n\times n$ matrix of non-negative entries, then $\omega(T)=\|\Re(T)\|,$ where $\Re(T)=\frac{T+T^*}{2}$ is the real part of $T$. 

As one significant application of this non-negativity issue, we mention one application on operator matrices. Let $\mathbb{H}_i$ be Hilbert spaces for $i=1,\ldots,n$, and let $T_{ij}\in\mathbb{B}(\mathbb{H}_j,\mathbb{H}_i)$. The operator matrix $[T_{ij}]$ is then an operator in $\mathbb{B}(\mathbb{H}_1\oplus\mathbb{H}_2\oplus\ldots\oplus\mathbb{H}_n).$ This is indeed a non-easy operator to deal with. However, in \cite{Hou_IEOT_1995}, the following bound was given
\begin{equation}\label{Eq_Hou}
\|[T_{ij}]\|\leq\|[\|T_{ij}\|]\|.
\end{equation}
The difference between the left and right sides of this inequality is that the left side is the norm of an operator matrix, while the right side is the norm of an $n\times n$ matrix of non-negative entries. It has been an interesting topic to discuss possible bounds for $\omega\left([T_{ij}]\right)$ in a way similar to that in \eqref{Eq_Hou}. It is so ambitious to have $\omega\left([T_{ij}]\right)\leq \omega\left([\omega(T_{ij})]\right).$ But this is impossible; as one can verify with the example
\[T=\left[
\begin{array}{cccc}
 -3 & 2 & -1 & -1 \\
 -2 & 2 & 3 & -1 \\
 -2 & 3 & 3 & -2 \\
 1 & 1 & 0 & -2 \\
\end{array}
\right],\]
as a $2\times 2$ operator matrix of square matrices. This challenge urges researchers to find upper bounds for $\omega\left([T_{ij}]\right)$ that simplify its computation. Among the most interesting upper bounds for the numerical radius of an operator matrix, the following was shown in  \cite{Abu-Omar_LAA_2015}, for an $n\times n$ operator matrix $T=[T_{ij}]\in\mathbb{B}(\oplus_{k=1}^{n}\mathbb{H})$:
\begin{equation}  \label{eq_Omar_1}
\omega(T)\leq \omega\left([t_{ij}]\right),\;{\text{where}}\; t_{ij}=\left\{%
\begin{array}{cc}
\omega(T_{ij}), & i=j \\ 
\|T_{ij}\|, & i\not=j%
\end{array}%
\right..
\end{equation}
Some other bounds for the numerical radius of an operator matrix can be found in \cite{Audeh_Filomat_2025, Bhunia_Miskolc_2023, Hajmohamadi_JMI_2018, Hirzallah_IEOT_2011,
Jana_MathSlov_2024, Sababheh_MIA_2024}. As an application of our results, we will be able to find a refined form of \eqref{eq_Omar_1}.

Our primary goal in this paper is to present some bounds for the numerical radius of operator matrices, with applications that involve the spectral radius $r(\cdot)$, which is defined for $T\in\mathbb{B}(\mathbb{H})$ by 
\[r(T)=\sup\{|\lambda|:\lambda\;is\;in\;the\;spectrum\;of\;T\}.\]
To support our results, we will need to prove refined versions of recent inner product inequalities involving both the operator norm and the numerical radius. 

The organization of the subsequent sections will be as follows. In the next section, we prove two bounds for the inner products of the form $\left| \left\langle Ax,y \right\rangle  \right|+\left| \left\langle By,x \right\rangle  \right|$. Then we use these bounds to prove numerical radius and spectral radius bounds, and compare them with existing results. Our results will be compared with results from \cite{Abu-Omar_LAA_2015,2,Hirzallah_IEOT_2011,Hou_IEOT_1995,Kittaneh_Studia_2003,Shebrawi_LAA_2017}.
\section{Two inner product bounds}
In this section we prove two upper bounds for $\left| \left\langle Ax,y \right\rangle  \right|+\left| \left\langle By,x \right\rangle  \right|.$ To show that both bounds are non-trivial, we give a concrete comparison with a result resul, and we show that these two bounds are non-comparable, in general.
\begin{lemma}\label{2}
Let $A\in \mathbb B\left( {{\mathbb H}_{1}},{{\mathbb H}_{2}} \right)$ and $B\in \mathbb B\left( {{\mathbb H}_{2}},{{\mathbb H}_{1}} \right)$. For any $x\in {{\mathbb H}_{1}} $, $y\in {{\mathbb H}_{2}}$, 
\[\left| \left\langle Ax,y \right\rangle  \right|+\left| \left\langle By,x \right\rangle  \right|\le \sqrt{\omega \left( \left| A \right|+i\left| {{B}^{*}} \right| \right)\omega \left( \left| {{A}^{*}} \right|+i\left| B \right| \right)+\left\| A \right\|\left\| B \right\|+\omega \left( BA \right)}\left\| x \right\|\left\| y \right\|.\]
\end{lemma}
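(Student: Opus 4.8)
The plan is to start from the elementary identity that lets one symmetrize a sum of two moduli. Given $x \in \mathbb{H}_1$ and $y \in \mathbb{H}_2$, I would choose unimodular scalars $\alpha, \beta$ so that $\alpha\langle Ax, y\rangle = |\langle Ax, y\rangle|$ and $\beta\langle By, x\rangle = |\langle By, x\rangle|$, and then write
\[
|\langle Ax, y\rangle| + |\langle By, x\rangle| = \langle \alpha Ax, y\rangle + \langle x, \overline{\beta} By\rangle = \langle \alpha Ax, y\rangle + \langle \overline{\beta} B^* x, y\rangle.
\]
Hmm — wait, that mixes the two Hilbert spaces awkwardly, so instead I would work on the direct sum $\mathbb{H}_1 \oplus \mathbb{H}_2$ with the unit vector $u = \frac{1}{\sqrt{2}}(x/\|x\| \oplus y/\|y\|)$ (assuming $x,y \neq 0$; the degenerate cases are trivial), and realize both $\langle Ax, y\rangle$ and $\langle By, x\rangle$ as off-diagonal matrix coefficients of a single $2\times 2$ operator matrix built from $A$ and $B$. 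The quantity $|\langle Ax, y\rangle| + |\langle By, x\rangle|$ should then be comparable to $\omega$ of an operator of the form $\left[\begin{smallmatrix} 0 & \beta B \\ \alpha A & 0\end{smallmatrix}\right]$ evaluated at $u$, after absorbing the phases.

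**Next** I would apply the mixed Cauchy–Schwarz inequality \eqref{eq_mixed_t} with $t = \tfrac12$ to each term separately: $|\langle Ax,y\rangle|^2 \le \langle |A|x,x\rangle \langle |A^*|y,y\rangle$ and $|\langle By,x\rangle|^2 \le \langle |B|y,y\rangle \langle |B^*|x,x\rangle$. Then, summing and using $\sqrt{a}+\sqrt{b} \le \sqrt{2(a+b)}$ (or a sharper AM–GM step), I would bound the left-hand side by a combination of the four quadratic forms $\langle |A|x,x\rangle$, $\langle |A^*|y,y\rangle$, $\langle |B|y,y\rangle$, $\langle |B^*|x,x\rangle$. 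The key algebraic move is to recognize $\langle |A|x,x\rangle + \langle |B^*|x,x\rangle = \langle (|A|+|B^*|)x,x\rangle \le \operatorname{Re}\langle(|A|+i|B^*|)x,x\rangle$-type expressions, or more precisely to pair $\langle(|A|+i|B^*|)x,x\rangle$ with $\langle(|A^*|+i|B|)y,y\rangle$ so that Buzano's inequality \eqref{Eq_Buz}, or a Cauchy–Schwarz estimate on the product, produces the term $\omega(|A|+i|B^*|)\,\omega(|A^*|+i|B|)$ under the square root.

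**The remaining two terms** $\|A\|\|B\|$ and $\omega(BA)$ I expect to come from a cross-term that arises when one does not split $|\langle Ax,y\rangle|+|\langle By,x\rangle|$ into two independent pieces but instead squares the sum directly: $(|\langle Ax,y\rangle|+|\langle By,x\rangle|)^2 = |\langle Ax,y\rangle|^2 + |\langle By,x\rangle|^2 + 2|\langle Ax,y\rangle||\langle By,x\rangle|$. The cross-product $2|\langle Ax,y\rangle||\langle By,x\rangle|$ can be handled by Buzano-type reasoning: writing $|\langle Ax,y\rangle||\langle By,x\rangle| = |\langle Ax,y\rangle||\langle x, B^*y\rangle|$ and applying \eqref{Eq_Buz} with $z=y$ to $Ax$ and $B^*y$ should give something like $\tfrac{\|y\|^2}{2}(|\langle Ax, B^*y\rangle| + \|Ax\|\|B^*y\|) = \tfrac{\|y\|^2}{2}(|\langle BAx, y\rangle| + \|Ax\|\|B^*y\|)$, whence $\omega(BA)$ and $\|A\|\|B\|$ emerge after normalizing and taking suprema over unit vectors.

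**The main obstacle** will be the bookkeeping of the phases $\alpha,\beta$ together with the imaginary units in $|A|+i|B^*|$: one must verify that the particular complexified combinations stated in the lemma are exactly what Buzano/Cauchy–Schwarz delivers, rather than some other pairing, and that no factor of $2$ is lost. I would expect to need the identity $\omega(X+iY)\,\omega(X-iY) \ge$ (or appropriately relates to) $\|X\|^2 + \|Y\|^2$-type bounds for self-adjoint $X,Y$, or its converse, to match the target form; pinning down whether the clean statement requires $t=\tfrac12$ in \eqref{eq_mixed_t} or a more delicate optimization over $t$ is the delicate point. If the direct symmetrization is too lossy, the fallback is to expand $(|\langle Ax,y\rangle| + |\langle By,x\rangle|)^2$ and treat the square-terms via \eqref{eq_mixed_t} and the cross-term via \eqref{Eq_Buz} as sketched, then recombine under one square root.
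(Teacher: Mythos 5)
Your ``fallback'' plan is precisely the paper's proof: expand $\left(|\langle Ax,y\rangle|+|\langle By,x\rangle|\right)^2$, bound the two squared terms by the mixed Cauchy--Schwarz inequality \eqref{eq_mixed_t} at $t=\tfrac12$, combine the resulting four quadratic forms via the scalar Cauchy--Schwarz inequality $ac+bd\le\sqrt{a^2+b^2}\,\sqrt{c^2+d^2}=|a+ib|\,|c+id|$ to produce $\omega\left(|A|+i|B^*|\right)\omega\left(|A^*|+i|B|\right)$, and handle the cross term by Buzano's inequality \eqref{Eq_Buz} with $z=y$ applied to the vectors $Ax$ and $B^*x$ (your $B^*y$ should read $B^*x$), which yields $\|A\|\,\|B\|+\omega(BA)$ after using $\langle Ax,B^*x\rangle=\langle BAx,x\rangle$. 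The opening direct-sum/phase symmetrization and the $\sqrt{a}+\sqrt{b}\le\sqrt{2(a+b)}$ step are unnecessary detours, but the route you finally settle on is correct and is exactly the one the paper takes.
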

\begin{proof}
Indeed, we know that by the Buzano's inequality \eqref{Eq_Buz}, the mixed Cauchy-Schwarz inequality \eqref{eq_mixed_t}, and the Cauchy-Schwarz inequality that
\begin{align}
  & {{\left( \left| \left\langle Ax,y \right\rangle  \right|+\left| \left\langle By,x \right\rangle  \right| \right)}^{2}} \nonumber\\ 
 & ={{\left| \left\langle Ax,y \right\rangle  \right|}^{2}}+{{\left| \left\langle By,x \right\rangle  \right|}^{2}}+2\left| \left\langle Ax,y \right\rangle  \right|\left| \left\langle By,x \right\rangle  \right| \nonumber\\ 
 & ={{\left| \left\langle Ax,y \right\rangle  \right|}^{2}}+{{\left| \left\langle By,x \right\rangle  \right|}^{2}}+2\left| \left\langle Ax,y \right\rangle  \right|\left| \left\langle y,{{B}^{*}}x \right\rangle  \right| \nonumber\\ 
 & ={{\left| \left\langle Ax,y \right\rangle  \right|}^{2}}+{{\left| \left\langle By,x \right\rangle  \right|}^{2}}+2\left| \left\langle Ax,y \right\rangle \left\langle y,{{B}^{*}}x \right\rangle  \right| \nonumber\\ 
 & \le {{\left| \left\langle Ax,y \right\rangle  \right|}^{2}}+{{\left| \left\langle By,x \right\rangle  \right|}^{2}}+\left( \left\| Ax \right\|\left\| {{B}^{*}}x \right\|+\left| \left\langle Ax,{{B}^{*}}x \right\rangle  \right| \right){{\left\| y \right\|}^{2}} \nonumber\\ 
 & ={{\left| \left\langle Ax,y \right\rangle  \right|}^{2}}+{{\left| \left\langle By,x \right\rangle  \right|}^{2}}+\left( \left\| Ax \right\|\left\| {{B}^{*}}x \right\|+\left| \left\langle BAx,x \right\rangle  \right| \right){{\left\| y \right\|}^{2}} \nonumber\\ 
 & \le \left\langle \left| A \right|x,x \right\rangle \left\langle \left| {{A}^{*}} \right|y,y \right\rangle +\left\langle \left| B \right|y,y \right\rangle \left\langle \left| {{B}^{*}} \right|x,x \right\rangle +\left( \left\| Ax \right\|\left\| {{B}^{*}}x \right\|+\left| \left\langle BAx,x \right\rangle  \right| \right){{\left\| y \right\|}^{2}} \nonumber\\ 
 & \le \sqrt{{{\left\langle \left| A \right|x,x \right\rangle }^{2}}+{{\left\langle \left| {{B}^{*}} \right|x,x \right\rangle }^{2}}}\sqrt{{{\left\langle \left| {{A}^{*}} \right|y,y \right\rangle }^{2}}+{{\left\langle \left| B \right|y,y \right\rangle }^{2}}}+\left( \left\| Ax \right\|\left\| {{B}^{*}}x \right\|+\left| \left\langle BAx,x \right\rangle  \right| \right){{\left\| y \right\|}^{2}} \nonumber\\ 
 & =\left| \left\langle \left| A \right|x,x \right\rangle +i\left\langle \left| {{B}^{*}} \right|x,x \right\rangle  \right|\left| \left\langle \left| {{A}^{*}} \right|y,y \right\rangle +i\left\langle \left| B \right|y,y \right\rangle  \right|+\left( \left\| Ax \right\|\left\| {{B}^{*}}x \right\|+\left| \left\langle BAx,x \right\rangle  \right| \right){{\left\| y \right\|}^{2}} \nonumber\\ 
 & =\left| \left\langle \left( \left| A \right|+i\left| {{B}^{*}} \right| \right)x,x \right\rangle  \right|\left| \left\langle \left( \left| {{A}^{*}} \right|+i\left| B \right| \right)y,y \right\rangle  \right|+\left( \left\| Ax \right\|\left\| {{B}^{*}}x \right\|+\left| \left\langle BAx,x \right\rangle  \right| \right){{\left\| y \right\|}^{2}} \label{3}\\ 
 & \le \omega \left( \left| A \right|+i\left| {{B}^{*}} \right| \right)\omega \left( \left| {{A}^{*}} \right|+i\left| B \right| \right){{\left\| x \right\|}^{2}}{{\left\| y \right\|}^{2}}+\left( \left\| A \right\|\left\| {{B}^{*}} \right\|+\omega \left( BA \right) \right){{\left\| x \right\|}^{2}}{{\left\| y \right\|}^{2}} \nonumber\\ 
 & =\left( \omega \left( \left| A \right|+i\left| {{B}^{*}} \right| \right)\omega \left( \left| {{A}^{*}} \right|+i\left| B \right| \right)+\left\| A \right\|\left\| B \right\|+\omega \left( BA \right) \right){{\left\| x \right\|}^{2}}{{\left\| y \right\|}^{2}}\nonumber,
\end{align}
as required.
\end{proof}

\begin{remark}\label{Rem_1_Bet1}
It has been shown in \cite[Proposition 1.4]{1} that if $A,B\in \mathbb B\left( \mathbb H \right)$ are self-adjoint, then
\begin{equation}\label{Eq_Fil}
\omega \left( A+iB \right)\le {{\left\| {{A}^{2}}+{{B}^{2}} \right\|}^{\frac{1}{2}}}.
\end{equation}
From Lemma \ref{2}, we infer that
\[\begin{aligned}
  & \left| \left\langle Ax,y \right\rangle  \right|+\left| \left\langle By,x \right\rangle  \right| \\ 
 & \le \sqrt{\omega \left( \left| A \right|+i\left| {{B}^{*}} \right| \right)\omega \left( \left| {{A}^{*}} \right|+i\left| B \right| \right)+\left\| A \right\|\left\| B \right\|+\omega \left( BA \right)}\left\| x \right\|\left\| y \right\| \\ 
 & \le \sqrt{\sqrt{\left\| {{\left| A \right|}^{2}}+{{\left| {{B}^{*}} \right|}^{2}} \right\|\left\| {{\left| {{A}^{*}} \right|}^{2}}+{{\left| B \right|}^{2}} \right\|}+\left\| A \right\|\left\| B \right\|+\omega \left( BA \right)}\left\| x \right\|\left\| y \right\| \\ 
 & \le \sqrt{\sqrt{\left( \left\| {{\left| A \right|}^{2}} \right\|+\left\| {{\left| {{B}^{*}} \right|}^{2}} \right\| \right)\left( \left\| {{\left| {{A}^{*}} \right|}^{2}} \right\|+\left\| {{\left| B \right|}^{2}} \right\| \right)}+\left\| A \right\|\left\| B \right\|+\omega \left( BA \right)}\left\| x \right\|\left\| y \right\| \\ 
 & =\sqrt{\sqrt{\left( {{\left\| \;\left| A \right| \;\right\|}^{2}}+{{\left\| \;\left| {{B}^{*}} \right| \;\right\|}^{2}} \right)\left( {{\left\| \;\left| {{A}^{*}} \right| \;\right\|}^{2}}+{{\left\|\; \left| B \right| \;\right\|}^{2}} \right)}+\left\| A \right\|\left\| B \right\|+\omega \left( BA \right)}\left\| x \right\|\left\| y \right\| \\ 
 & =\sqrt{{{\left\| A \right\|}^{2}}+{{\left\| B \right\|}^{2}}+\left\| A \right\|\left\| B \right\|+\omega \left( BA \right)}\left\| x \right\|\left\| y \right\| \\ 
 & =\sqrt{{{\left( \left\| A \right\|+\left\| B \right\| \right)}^{2}}-\left( \left\| A \right\|\left\| B \right\|-\omega \left( BA \right) \right)}\left\| x \right\|\left\| y \right\|.
\end{aligned}\]
Indeed, our result nicely improves
\[\left| \left\langle Ax,y \right\rangle  \right|+\left| \left\langle By,x \right\rangle  \right|\le \sqrt{{{\left( \left\| A \right\|+\left\| B \right\| \right)}^{2}}-\left( \left\| A \right\|\left\| B \right\|-\omega \left( BA \right) \right)}\left\| x \right\|\left\| y \right\|,\]
which was proved in \cite[Lemma 2.2]{2}.
\end{remark}

Our second inner product bound can be stated as follows.

\begin{lemma}\label{4}
Let $A\in \mathbb B\left( {{\mathbb H}_{1}},{{\mathbb H}_{2}} \right)$ and $B\in \mathbb B\left( {{\mathbb H}_{2}},{{\mathbb H}_{1}} \right)$. For any $x\in {{\mathbb H}_{1}} $, $y\in {{\mathbb H}_{2}}$, 
\[\left| \left\langle Ax,y \right\rangle  \right|+\left| \left\langle By,x \right\rangle  \right|\le \sqrt{\omega \left( \left| A \right|+i\left| {{B}^{*}} \right| \right)\omega \left( \left| {{A}^{*}} \right|+i\left| B \right| \right)+\frac{1}{2}\left\| {{\left| A \right|}^{2}}+{{\left| {{B}^{*}} \right|}^{2}} \right\|+\omega \left( BA \right)}\left\| x \right\|\left\| y \right\|.\]
\end{lemma}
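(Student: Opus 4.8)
The plan is to run the chain of inequalities from the proof of Lemma \ref{2} unchanged, all the way through the display \eqref{3}, which gives
\[
\left( \left| \left\langle Ax,y \right\rangle  \right|+\left| \left\langle By,x \right\rangle  \right| \right)^{2}
\le \left| \left\langle \left( \left| A \right|+i\left| {{B}^{*}} \right| \right)x,x \right\rangle  \right|\,\left| \left\langle \left( \left| {{A}^{*}} \right|+i\left| B \right| \right)y,y \right\rangle  \right|
+\bigl( \left\| Ax \right\|\,\left\| {{B}^{*}}x \right\|+\left| \left\langle BAx,x \right\rangle  \right| \bigr)\left\| y \right\|^{2}.
\]
As in Lemma \ref{2}, the first summand is bounded by $\omega(|A|+i|B^*|)\,\omega(|A^*|+i|B|)\|x\|^2\|y\|^2$ and $|\langle BAx,x\rangle|\le\omega(BA)\|x\|^2$. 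The only place where I would deviate from the proof of Lemma \ref{2} is in handling the cross term $\|Ax\|\,\|B^*x\|$.

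Instead of separating $\|Ax\|\le\|A\|\|x\|$ and $\|B^*x\|\le\|B^*\|\|x\|$ (which led to $\|A\|\|B\|$ in Lemma \ref{2}), I would apply the arithmetic--geometric mean inequality while both factors still act on the same vector:
\[
\|Ax\|\,\|B^*x\|\le\tfrac12\left(\|Ax\|^2+\|B^*x\|^2\right)
=\tfrac12\left(\left\langle|A|^2x,x\right\rangle+\left\langle|B^*|^2x,x\right\rangle\right)
=\tfrac12\left\langle\bigl(|A|^2+|B^*|^2\bigr)x,x\right\rangle
\le\tfrac12\left\||A|^2+|B^*|^2\right\|\,\|x\|^2,
\]
using $\|Ax\|^2=\langle|A|^2x,x\rangle$, $\|B^*x\|^2=\langle|B^*|^2x,x\rangle$, and the fact that $|A|^2+|B^*|^2$ is positive, so that $\langle(|A|^2+|B^*|^2)x,x\rangle\le\||A|^2+|B^*|^2\|\,\|x\|^2$. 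Substituting the three bounds into the display above and taking square roots produces exactly the asserted inequality.

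I do not anticipate any real obstacle: this is essentially a one-line variation on Lemma \ref{2}, and the only substantive point is the observation that invoking AM--GM \emph{before} passing to operator norms replaces $\|A\|\|B\|$ by $\tfrac12\||A|^2+|B^*|^2\|$. It is worth noting that this does not give a uniformly sharper statement than Lemma \ref{2}: both $\|A\|\|B\|$ and $\tfrac12\||A|^2+|B^*|^2\|$ are dominated by $\tfrac12(\|A\|^2+\|B\|^2)$, but neither of the two dominates the other in general, which is precisely why the two lemmas are incomparable, as announced at the start of this section.
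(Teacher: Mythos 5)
Your proposal is correct and follows essentially the same route as the paper: both start from display \eqref{3} and replace the estimate $\left\| Ax \right\|\left\| {{B}^{*}}x \right\|\le \left\| A \right\|\left\| B \right\|{{\left\| x \right\|}^{2}}$ by the AM--GM bound $\left\| Ax \right\|\left\| {{B}^{*}}x \right\|\le \frac{1}{2}\left\langle \left( {{\left| A \right|}^{2}}+{{\left| {{B}^{*}} \right|}^{2}} \right)x,x \right\rangle \le \frac{1}{2}\left\| {{\left| A \right|}^{2}}+{{\left| {{B}^{*}} \right|}^{2}} \right\|{{\left\| x \right\|}^{2}}$ before passing to operator norms. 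No gaps.
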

\begin{proof}
It follows from \eqref{3} that
\[\begin{aligned}
  & {{\left( \left| \left\langle Ax,y \right\rangle  \right|+\left| \left\langle By,x \right\rangle  \right| \right)}^{2}} \\ 
 & \le \left| \left\langle \left( \left| A \right|+i\left| {{B}^{*}} \right| \right)x,x \right\rangle  \right|\left| \left\langle \left( \left| {{A}^{*}} \right|+i\left| B \right| \right)y,y \right\rangle  \right|+\left( \left\| Ax \right\|\left\| {{B}^{*}}x \right\|+\left| \left\langle BAx,x \right\rangle  \right| \right){{\left\| y \right\|}^{2}} \\ 
 & =\left| \left\langle \left( \left| A \right|+i\left| {{B}^{*}} \right| \right)x,x \right\rangle  \right|\left| \left\langle \left( \left| {{A}^{*}} \right|+i\left| B \right| \right)y,y \right\rangle  \right|+\left( \sqrt{\left\langle Ax,Ax \right\rangle \left\langle {{B}^{*}}x,{{B}^{*}}x \right\rangle }+\left| \left\langle BAx,x \right\rangle  \right| \right){{\left\| y \right\|}^{2}} \\ 
 & =\left| \left\langle \left( \left| A \right|+i\left| {{B}^{*}} \right| \right)x,x \right\rangle  \right|\left| \left\langle \left( \left| {{A}^{*}} \right|+i\left| B \right| \right)y,y \right\rangle  \right|+\left( \sqrt{\left\langle {{\left| A \right|}^{2}}x,x \right\rangle \left\langle {{\left| {{B}^{*}} \right|}^{2}}x,x \right\rangle }+\left| \left\langle BAx,x \right\rangle  \right| \right){{\left\| y \right\|}^{2}} \\ 
 & \le \left| \left\langle \left( \left| A \right|+i\left| {{B}^{*}} \right| \right)x,x \right\rangle  \right|\left| \left\langle \left( \left| {{A}^{*}} \right|+i\left| B \right| \right)y,y \right\rangle  \right|+\left( \frac{1}{2}\left( \left\langle {{\left| A \right|}^{2}}x,x \right\rangle +\left\langle {{\left| {{B}^{*}} \right|}^{2}}x,x \right\rangle  \right)+\left| \left\langle BAx,x \right\rangle  \right| \right){{\left\| y \right\|}^{2}} \\ 
 & =\left| \left\langle \left( \left| A \right|+i\left| {{B}^{*}} \right| \right)x,x \right\rangle  \right|\left| \left\langle \left( \left| {{A}^{*}} \right|+i\left| B \right| \right)y,y \right\rangle  \right|+\left( \frac{1}{2}\left\langle \left( {{\left| A \right|}^{2}}+{{\left| {{B}^{*}} \right|}^{2}} \right)x,x \right\rangle +\left| \left\langle BAx,x \right\rangle  \right| \right){{\left\| y \right\|}^{2}} \\ 
 & \le \omega \left( \left| A \right|+i\left| {{B}^{*}} \right| \right)\omega \left( \left| {{A}^{*}} \right|+i\left| B \right| \right){{\left\| x \right\|}^{2}}{{\left\| y \right\|}^{2}}+\left( \frac{1}{2}\left\| {{\left| A \right|}^{2}}+{{\left| {{B}^{*}} \right|}^{2}} \right\|+\omega \left( BA \right) \right){{\left\| x \right\|}^{2}}{{\left\| y \right\|}^{2}} \\ 
 & =\left( \omega \left( \left| A \right|+i\left| {{B}^{*}} \right| \right)\omega \left( \left| {{A}^{*}} \right|+i\left| B \right| \right)+\frac{1}{2}\left\| {{\left| A \right|}^{2}}+{{\left| {{B}^{*}} \right|}^{2}} \right\|+\omega \left( BA \right) \right){{\left\| x \right\|}^{2}}{{\left\| y \right\|}^{2}}, 
\end{aligned}\]
as required.
\end{proof}

\begin{remark}\label{Rem_222}
In both Lemmas \ref{2} and {4}, we found upper bounds for $\left| \left\langle Ax,y \right\rangle  \right|+\left| \left\langle By,x \right\rangle  \right|$. In this remark, we give two examples to show that neither bound can always be better than the other. First, let
\[A=\left[
\begin{array}{cc}
 4 & 1 \\
 3 & 3 \\
\end{array}
\right], B=\left[
\begin{array}{cc}
 4& 1 \\
 -3 & -1 \\
\end{array}
\right], x=\left[
\begin{array}{c}
 -\frac{1}{\sqrt{2}} \\
 \frac{1}{\sqrt{2}} \\
\end{array}
\right], y=\left[
\begin{array}{c}
 -\frac{4}{\sqrt{17}} \\
 -\frac{1}{\sqrt{17}} \\
\end{array}
\right].\]
It can be seen that
\[\sqrt{\omega \left( \left| A \right|+i\left| {{B}^{*}} \right| \right)\omega \left( \left| {{A}^{*}} \right|+i\left| B \right| \right)+\left\| A \right\|\left\| B \right\|+\omega \left( BA \right)}\left\| x \right\|\left\| y \right\|\approx 9.567,\]
while
\[\sqrt{\omega \left( \left| A \right|+i\left| {{B}^{*}} \right| \right)\omega \left( \left| {{A}^{*}} \right|+i\left| B \right| \right)+\frac{1}{2}\left\| {{\left| A \right|}^{2}}+{{\left| {{B}^{*}} \right|}^{2}} \right\|+\omega \left( BA \right)}\left\| x \right\|\left\| y \right\|\approx 9.10612,\]
showing that Lemma \ref{4} is better in this case.\\
On the other hand, letting
\[A=\left[
\begin{array}{cc}
 2 & 2 \\
 -1 & 2 \\
\end{array}
\right], B=\left[
\begin{array}{cc}
 3 & 4 \\
 4 & 1 \\
\end{array}
\right], x=\left[
\begin{array}{c}
 -\frac{3}{\sqrt{10}} \\
 -\frac{1}{\sqrt{10}} \\
\end{array}
\right], y=\left[
\begin{array}{c}
 \frac{3}{\sqrt{13}} \\
 -\frac{2}{\sqrt{13}} \\
\end{array}
\right],\]
we find that
\[\sqrt{\omega \left( \left| A \right|+i\left| {{B}^{*}} \right| \right)\omega \left( \left| {{A}^{*}} \right|+i\left| B \right| \right)+\left\| A \right\|\left\| B \right\|+\omega \left( BA \right)}\left\| x \right\|\left\| y \right\|\approx 9.02776,\]
while
\[\sqrt{\omega \left( \left| A \right|+i\left| {{B}^{*}} \right| \right)\omega \left( \left| {{A}^{*}} \right|+i\left| B \right| \right)+\frac{1}{2}\left\| {{\left| A \right|}^{2}}+{{\left| {{B}^{*}} \right|}^{2}} \right\|+\omega \left( BA \right)}\left\| x \right\|\left\| y \right\|\approx 9.27186,\]
showing that Lemma \ref{2} is better in this case.

Of course, this comparison is based on the comparison between the two quantities $\|A\|\;\|B\|$ and 
$\frac{1}{2}\left\| {{\left| A \right|}^{2}}+{{\left| {{B}^{*}} \right|}^{2}} \right\|$. These two quantities are, in general, incomparable, as one can verify with the above choices.
\end{remark}

\section{Applications towards the numerical radius and spectral radius}

In this section, we present our main findings, including some bounds for operator matrices, which are compared with previously known bounds. Interestingly, applying Lemma \ref{2} implies the following bound, while Lemma \ref{4} implies another bound, as we will see below. Simpler cases will be treated in separate lemmas for better exposition.

\begin{theorem}\label{5}
Let $\left[ {{T}_{ij}} \right]$ be an $n\times n$ operator matrix with ${{T}_{ij}}\in \mathbb B\left( {{\mathbb H}_{j}},{{\mathbb H}_{i}} \right)$. Then
\[\omega \left( \left[ {{T}_{ij}} \right] \right)\le \omega \left( \left[ {{\alpha }_{ij}} \right]_{i,j=1}^{n} \right),\]
where
\begin{equation*}
{{\alpha }_{ij}}=\left\{ \begin{array}{ll}
   \omega \left( {{T}_{ii}} \right)&\text{ if }i=j \\ 
  \sqrt{\omega \left( \left| {{T}_{ij}} \right|+i\left| T_{ji}^{*} \right| \right)\omega \left( \left| T_{ij}^{*} \right|+i\left| {{T}_{ji}} \right| \right)+\left\| {{T}_{ij}} \right\|\left\| {{T}_{ji}} \right\|+\omega \left( {{T}_{ji}}{{T}_{ij}} \right)}&\text{ if }i<j \\ 
  0&\text{ if }i>j \\ 
\end{array} \right..
\end{equation*}
\end{theorem}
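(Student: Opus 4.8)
The plan is to unfold the definition of the numerical radius of $\left[T_{ij}\right]$ on the direct sum $\mathbb H_1\oplus\cdots\oplus\mathbb H_n$, reduce the resulting bilinear form to a triangle-inequality estimate, apply Lemma \ref{2} to each off-diagonal pair, and recognize what remains as a nonnegative quadratic form controlled by the scalar matrix $\left[\alpha_{ij}\right]$. First I would fix a unit vector $\xi=(x_1,\dots,x_n)\in\mathbb H_1\oplus\cdots\oplus\mathbb H_n$, so $\sum_{i=1}^n\|x_i\|^2=1$, and write $\left\langle\left[T_{ij}\right]\xi,\xi\right\rangle=\sum_{i,j=1}^n\left\langle T_{ij}x_j,x_i\right\rangle$. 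Separating the diagonal from the off-diagonal entries and pairing the $(i,j)$ and $(j,i)$ terms for $i<j$, the triangle inequality gives
\[
\left|\left\langle\left[T_{ij}\right]\xi,\xi\right\rangle\right|\le\sum_{i=1}^n\left|\left\langle T_{ii}x_i,x_i\right\rangle\right|+\sum_{1\le i<j\le n}\bigl(\left|\left\langle T_{ij}x_j,x_i\right\rangle\right|+\left|\left\langle T_{ji}x_i,x_j\right\rangle\right|\bigr).
\]

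Next I would estimate each summand. For the diagonal terms, $\left|\left\langle T_{ii}x_i,x_i\right\rangle\right|\le\omega\left(T_{ii}\right)\|x_i\|^2$ directly from the definition of $\omega$. For a fixed pair $i<j$, I would invoke Lemma \ref{2} with the identification $\mathbb H_1\leftrightarrow\mathbb H_j$, $\mathbb H_2\leftrightarrow\mathbb H_i$, and the choices $A=T_{ij}\in\mathbb B(\mathbb H_j,\mathbb H_i)$, $B=T_{ji}\in\mathbb B(\mathbb H_i,\mathbb H_j)$, $x=x_j$, $y=x_i$. Then $\left\langle Ax,y\right\rangle=\left\langle T_{ij}x_j,x_i\right\rangle$ and $\left\langle By,x\right\rangle=\left\langle T_{ji}x_i,x_j\right\rangle$, so Lemma \ref{2} gives exactly
\[
\left|\left\langle T_{ij}x_j,x_i\right\rangle\right|+\left|\left\langle T_{ji}x_i,x_j\right\rangle\right|\le\alpha_{ij}\,\|x_i\|\,\|x_j\|,
\]
since the constant produced by the lemma is $\sqrt{\omega(\left|T_{ij}\right|+i\left|T_{ji}^*\right|)\omega(\left|T_{ij}^*\right|+i\left|T_{ji}\right|)+\|T_{ij}\|\|T_{ji}\|+\omega(T_{ji}T_{ij})}=\alpha_{ij}$.

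Finally I would set $u=(\|x_1\|,\dots,\|x_n\|)$, a unit vector in $\mathbb C^n$ with nonnegative coordinates. Using $\alpha_{ii}=\omega\left(T_{ii}\right)$ and $\alpha_{ij}=0$ for $i>j$, the right-hand side of the two displays above is precisely $\sum_{i,j=1}^n\alpha_{ij}u_iu_j=\left\langle\left[\alpha_{ij}\right]u,u\right\rangle$, which is real and nonnegative, hence at most $\omega(\left[\alpha_{ij}\right])$ because $\|u\|=1$. Thus $\left|\left\langle\left[T_{ij}\right]\xi,\xi\right\rangle\right|\le\omega(\left[\alpha_{ij}\right])$, and taking the supremum over all unit $\xi$ finishes the proof. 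I do not expect a genuine obstacle here; the only point requiring care is the bookkeeping when applying Lemma \ref{2}, namely matching the domains and codomains of $A=T_{ij}$ and $B=T_{ji}$ to the ambient spaces $\mathbb H_j,\mathbb H_i$ and verifying that the lemma's constant coincides verbatim with $\alpha_{ij}$, together with the routine observation that a nonnegative quadratic form evaluated at a unit vector is dominated by the numerical radius.
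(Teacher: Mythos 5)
Your proof is correct and follows essentially the same route as the paper's own argument: split the quadratic form into diagonal and paired off-diagonal terms, apply Lemma \ref{2} to each pair $(i,j)$ with $i<j$, and dominate the resulting nonnegative scalar quadratic form in $(\|x_1\|,\dots,\|x_n\|)$ by $\omega\left(\left[\alpha_{ij}\right]\right)$. The bookkeeping of domains and codomains in your application of Lemma \ref{2} is exactly right, so nothing further is needed.
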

\begin{proof}
Let $x=\oplus_{i=1}^{n}x_i\in\oplus_{i=1}^{n}\mathbb{H}_i$ be a unit vector, so that $\sum_{i=1}^{n}\|x_i\|^2=1.$ We have by Lemma \ref{2} that
\[\begin{aligned}
  & \left| \sum\limits_{i,j=1}^{n}{\left\langle {{T}_{ij}}{{x}_{j}},{{x}_{i}} \right\rangle } \right| \\ 
 & \le \sum\limits_{i,j=1}^{n}{\left| \left\langle {{T}_{ij}}{{x}_{j}},{{x}_{i}} \right\rangle  \right|} \\ 
 & = \sum\limits_{j=1}^{n}{\left| \left\langle {{T}_{jj}}{{x}_{j}},{{x}_{j}} \right\rangle  \right|}+\sum\limits_{\underset{i\ne j}{\mathop{i,j=1}}\,}^{n}{\left| \left\langle {{T}_{ij}}{{x}_{j}},{{x}_{i}} \right\rangle  \right|} \\ 
 & =\sum\limits_{j=1}^{n}{\left| \left\langle {{T}_{jj}}{{x}_{j}},{{x}_{j}} \right\rangle  \right|}+\sum\limits_{\underset{i<j}{\mathop{i,j=1}}\,}^{n}\left({\left| \left\langle {{T}_{ij}}{{x}_{j}},{{x}_{i}} \right\rangle  \right|+\left| \left\langle {{T}_{ji}}{{x}_{i}},{{x}_{j}} \right\rangle  \right|}\right) \\ 
 & \le \sum\limits_{j=1}^{n}{\omega \left( {{T}_{jj}} \right){{\left\| {{x}_{j}} \right\|}^{2}}}+\sum\limits_{\underset{i<j}{\mathop{i,j=1}}\,}^{n}{{{\alpha }_{ij}}\left\| {{x}_{i}} \right\|\left\| {{x}_{j}} \right\|} \\ 
 & =\left\langle \left[ {{\alpha }_{ij}} \right]_{i,j=1}^{n}\left| x \right|,\left| x \right| \right\rangle,
\end{aligned}\]
where $\left| x \right|={{\left( \left\| {{x}_{1}} \right\|,\left\| {{x}_{2}} \right\|,\ldots ,\left\| {{x}_{n}} \right\| \right)}^{T}}\in {{\mathbb{C}}^{n}}$ is a unit vector, and $\alpha_{ij}$ is as given in the statement of the theorem. From this, we obtain
\[\left| \sum\limits_{i,j=1}^{n}{\left\langle {{T}_{ij}}{{x}_{j}},{{x}_{i}} \right\rangle } \right|\le \omega \left( \left[ {{\alpha }_{ij}} \right]_{i,j=1}^{n} \right).\] Hence 
\[\omega \left( \left[ {{T}_{ij}} \right] \right)\le \omega \left( \left[ {{\alpha }_{ij}} \right]_{i,j=1}^{n} \right),\]
as required.
\end{proof}

\begin{remark}
Following \eqref{Eq_Fil} and the calculations in Remark \ref{Rem_1_Bet1}, we see that
\[\sqrt{\omega \left( \left| {{T}_{ij}} \right|+i\left| T_{ji}^{*} \right| \right)\omega \left( \left| T_{ij}^{*} \right|+i\left| {{T}_{ji}} \right| \right)+\left\| {{T}_{ij}} \right\|\left\| {{T}_{ji}} \right\|+\omega \left( {{T}_{ji}}{{T}_{ij}} \right)}\leq \|T_{ij}\|+\|T_{ji}\|.\]
Let $\gamma_{ij}=\left\{\begin{array}{cc}\omega(T_{ii}),&i=j\\\|T_{ij}\|+\|T_{ji}\|,&i<j\\0,&i>j\end{array}\right..$ Then $\alpha_{ij}\leq \gamma_{ij},$ where $\alpha_{ij}$ is as in Theorem \ref{5}. Since both $\alpha_{ij},\gamma_{ij}\geq 0$ and $\alpha_{ij}\leq \gamma_{ij}$, it follows that $\omega([\alpha_{ij}])\leq \omega([\gamma_{ij}]);$ see \cite{Goldberg_LAA_1975} and \cite[Corollary 2.1]{Turman_RIM_2025}. On the other hand, since $\gamma_{ij}\geq 0$, it follows that (see see \cite{Goldberg_LAA_1975,Turman_RIM_2025})
\begin{align*}
\omega([\gamma_{ij}])=\left\|\Re[\gamma_{ij}]\right\|=\left\|\Re[t_{ij}]\right\|=\omega([t_{ij}]),
\end{align*}
where \[ t_{ij}=\left\{%
\begin{array}{cc}
\omega(T_{ij}), & i=j \\ 
\|T_{ij}\|, & i\not=j%
\end{array}%
\right..\]
Thus, we have shown that the bound we found in Theorem \ref{5} is sharper than that in \eqref{eq_Omar_1}.

Furthermore, it was shown in \cite{Hou_IEOT_1995} that $\omega([T_{ij}])\leq\omega([\|T_{ij}\|]).$ It can be easily seen that \eqref{eq_Omar_1} is a refinement of this celebrated bound. Thus, our bound in Theorem \ref{5} is also a refinement of this latter bound.
\end{remark}

Now Lemma \ref{4} implies the following other bound, whose proof is identical to that of Theorem \ref{5}, but implementing Lemma \ref{4} instead of Lemma \ref{2}.
\begin{theorem}\label{6}
Let $\left[ {{T}_{ij}} \right]$ be an $n\times n$ operator matrix with ${{T}_{ij}}\in \mathbb B\left( {{\mathbb H}_{j}},{{\mathbb H}_{i}} \right)$. Then
\[\omega \left( \left[ {{T}_{ij}} \right] \right)\le \omega \left( \left[ {{\beta }_{ij}} \right]_{i,j=1}^{n} \right),\]
where
\begin{equation*}
{{\beta }_{ij}}=\left\{ \begin{array}{ll}
   \omega \left( {{T}_{ii}} \right)&\text{ if }i=j \\ 
  \sqrt{\omega \left( \left| {{T}_{ij}} \right|+i\left| T_{ji}^{*} \right| \right)\omega \left( \left| T_{ij}^{*} \right|+i\left| {{T}_{ji}} \right| \right)+\frac{1}{2}\left\| {{\left| {{T}_{ij}} \right|}^{2}}+{{\left| T_{ji}^{*} \right|}^{2}} \right\|+\omega \left( {{T}_{ji}}{{T}_{ij}} \right)}&\text{ if }i<j \\ 
  0&\text{ if }i>j \\ 
\end{array} \right..
\end{equation*}
\end{theorem}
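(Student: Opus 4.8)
The plan is to mimic the proof of Theorem \ref{5} verbatim, the only change being the use of Lemma \ref{4} in place of Lemma \ref{2} for the off-diagonal estimate. First I would fix a unit vector $x=\oplus_{i=1}^{n}x_i\in\oplus_{i=1}^{n}\mathbb H_i$, so that $\sum_{i=1}^{n}\|x_i\|^2=1$, and expand the quadratic form as $\langle [T_{ij}]x,x\rangle=\sum_{i,j=1}^{n}\langle T_{ij}x_j,x_i\rangle$. Applying the triangle inequality and separating the diagonal from the off-diagonal entries, I would group the off-diagonal terms in symmetric pairs indexed by $i<j$, writing $\sum_{i\ne j}|\langle T_{ij}x_j,x_i\rangle|=\sum_{i<j}\bigl(|\langle T_{ij}x_j,x_i\rangle|+|\langle T_{ji}x_i,x_j\rangle|\bigr)$.

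Next, for each pair $i<j$ I would invoke Lemma \ref{4} with the identifications $A=T_{ij}\in\mathbb B(\mathbb H_j,\mathbb H_i)$, $B=T_{ji}\in\mathbb B(\mathbb H_i,\mathbb H_j)$, feeding $x_j\in\mathbb H_j$ in the role of ``$x$'' and $x_i\in\mathbb H_i$ in the role of ``$y$''. Since $|A|=|T_{ij}|$, $|B^*|=|T_{ji}^*|$, $|A^*|=|T_{ij}^*|$, $|B|=|T_{ji}|$, $|A|^2+|B^*|^2=|T_{ij}|^2+|T_{ji}^*|^2$, and $BA=T_{ji}T_{ij}$, the bound produced by Lemma \ref{4} reads exactly $|\langle T_{ij}x_j,x_i\rangle|+|\langle T_{ji}x_i,x_j\rangle|\le\beta_{ij}\|x_i\|\|x_j\|$, with $\beta_{ij}$ as in the statement. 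For the diagonal entries I would use the definition of the numerical radius to get $|\langle T_{jj}x_j,x_j\rangle|\le\omega(T_{jj})\|x_j\|^2=\beta_{jj}\|x_j\|^2$.

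Combining these estimates yields $|\langle [T_{ij}]x,x\rangle|\le\sum_{j=1}^{n}\beta_{jj}\|x_j\|^2+\sum_{i<j}\beta_{ij}\|x_i\|\|x_j\|$. Setting $|x|=(\|x_1\|,\ldots,\|x_n\|)^T\in\mathbb C^n$, which is a unit vector, and using that $[\beta_{ij}]$ is upper triangular with $\beta_{ij}\ge 0$, the right-hand side equals $\langle [\beta_{ij}]\,|x|,|x|\rangle\le\omega\bigl([\beta_{ij}]\bigr)$. Taking the supremum over all unit $x$ then gives $\omega([T_{ij}])\le\omega([\beta_{ij}])$.

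There is essentially no obstacle beyond bookkeeping. The one point that deserves a small check is that the Hilbert spaces match up correctly when Lemma \ref{4} is applied to the pair $(T_{ij},T_{ji})$ for $i<j$ — in particular one must use $x_j$, not $x_i$, as the first vector argument so that $A=T_{ij}$ acts on the correct space — and that the scalar expression $\sum_{j}\beta_{jj}\|x_j\|^2+\sum_{i<j}\beta_{ij}\|x_i\|\|x_j\|$ genuinely coincides with $\langle [\beta_{ij}]\,|x|,|x|\rangle$, which holds because $|x|$ has nonnegative entries and $\beta_{ij}=0$ whenever $i>j$.
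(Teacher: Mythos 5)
Your proposal is correct and coincides with the paper's own treatment: the paper proves Theorem \ref{6} by explicitly stating that the argument of Theorem \ref{5} carries over verbatim with Lemma \ref{4} replacing Lemma \ref{2}, which is exactly what you do, including the correct identification $A=T_{ij}$, $B=T_{ji}$, $x=x_j$, $y=x_i$ so that $BA=T_{ji}T_{ij}$ and $|A|^2+|B^*|^2=|T_{ij}|^2+|T_{ji}^*|^2$ match the stated $\beta_{ij}$.
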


From Theorem \ref{5}, we obtain the following simple form of the numerical radius of a $2\times 2$ operator matrix.
\begin{corollary}\label{Cor_1}
Let $A\in \mathbb B\left( {{\mathbb H}_{1}} \right)$, $B\in \mathbb B\left( {{\mathbb H}_{2}},{{\mathbb H}_{1}} \right)$, $C\in \mathbb B\left( {{\mathbb H}_{1}},{{\mathbb H}_{2}} \right)$, and $D\in \mathbb B\left( {{\mathbb H}_{2}} \right)$. Then
\[\begin{aligned}
   \omega \left( \left[ \begin{matrix}
   A & B  \\
   C & D  \\
\end{matrix} \right] \right)&\le \frac{1}{2}\left( \omega \left( A \right)+\omega \left( D \right) \right) \\ 
 &\quad +\frac{1}{2}\sqrt{{{\left( \omega \left( A \right)-\omega \left( D \right) \right)}^{2}}+\omega \left( \left| B \right|+i\left| {{C}^{*}} \right| \right)\omega \left( \left| {{B}^{*}} \right|+i\left| C \right| \right)+\left\| B \right\|\left\| C \right\|+\omega \left( CB \right)}.  
\end{aligned}\]
\end{corollary}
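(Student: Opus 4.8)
The plan is to specialize Theorem \ref{5} to the case $n=2$ and then evaluate in closed form the numerical radius of the resulting $2\times 2$ scalar matrix. Taking $T_{11}=A$, $T_{12}=B$, $T_{21}=C$, $T_{22}=D$ in Theorem \ref{5}, one immediately obtains
\[\omega\left(\left[\begin{matrix} A & B \\ C & D\end{matrix}\right]\right)\le \omega\left(\left[\begin{matrix} a & c \\ 0 & b\end{matrix}\right]\right),\]
where $a=\omega(A)$, $b=\omega(D)$, and $c=\sqrt{\omega\left(\left|B\right|+i\left|C^{*}\right|\right)\omega\left(\left|B^{*}\right|+i\left|C\right|\right)+\left\|B\right\|\left\|C\right\|+\omega(CB)}$. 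Note in particular that $a,b,c\ge 0$.

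Next I would use the fact, recorded in \cite{Goldberg_LAA_1975}, that a matrix with nonnegative entries has numerical radius equal to the operator norm of its real part. Since $\left[\begin{matrix} a & c \\ 0 & b\end{matrix}\right]$ has nonnegative entries, this gives
\[\omega\left(\left[\begin{matrix} a & c \\ 0 & b\end{matrix}\right]\right)=\left\|\left[\begin{matrix} a & c/2 \\ c/2 & b\end{matrix}\right]\right\|.\]
Then I would compute the norm of the $2\times 2$ real symmetric matrix on the right as its spectral radius: its eigenvalues are $\tfrac{a+b}{2}\pm\tfrac12\sqrt{(a-b)^2+c^2}$, and because $a,b\ge 0$ the larger of the two in absolute value is $\tfrac{a+b}{2}+\tfrac12\sqrt{(a-b)^2+c^2}$. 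Substituting $c^2=\omega\left(\left|B\right|+i\left|C^{*}\right|\right)\omega\left(\left|B^{*}\right|+i\left|C\right|\right)+\left\|B\right\|\left\|C\right\|+\omega(CB)$ yields precisely the stated bound.

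This argument is essentially mechanical; there is no serious obstacle. The only point that needs a moment's care is the last step, where one must confirm that the relevant eigenvalue of the symmetric matrix is the sum $\tfrac{a+b}{2}+\tfrac12\sqrt{(a-b)^2+c^2}$ rather than the absolute value of the (possibly negative) smaller eigenvalue — an observation that rests on $a=\omega(A)\ge 0$ and $b=\omega(D)\ge 0$. Invoking the identity $\omega(\cdot)=\|\Re(\cdot)\|$ for matrices with nonnegative entries is the other ingredient worth flagging explicitly, and it is exactly the tool already used elsewhere in the paper.
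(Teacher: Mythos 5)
Your proposal is correct and follows exactly the route the paper intends: specialize Theorem \ref{5} to $n=2$, then evaluate $\omega$ of the resulting nonnegative upper-triangular $2\times 2$ matrix via $\omega(\cdot)=\|\Re(\cdot)\|$ and the eigenvalue formula for the symmetric real part. The paper gives no written proof of this corollary, and your computation (including the sign check that the dominant eigenvalue is $\tfrac{a+b}{2}+\tfrac12\sqrt{(a-b)^2+c^2}$ because $a,b\ge 0$) supplies precisely the missing mechanical details.
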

\begin{remark}
In \cite{Hirzallah_IEOT_2011}, it was shown that
\begin{equation}\label{Eq_Hirz_op2}
 \omega \left( \left[ \begin{matrix}
   A & B  \\
   C & D  \\
\end{matrix} \right] \right)\le\max\{\omega(A),\omega(D)\}+\frac{\omega(B+C)+\omega(B-C)}{2}.
\end{equation}
If we let 
\[A=\left[
\begin{array}{cc}
 -1 & -2 \\
 -1 & 2 \\
\end{array}
\right], B=\left[
\begin{array}{cc}
 2 & 1 \\
 0 & -2 \\
\end{array}
\right], C=\left[
\begin{array}{cc}
 -3 & -1 \\
 -3 & -3 \\
\end{array}
\right], D=\left[
\begin{array}{cc}
 -2 & 3 \\
 -3 & 0 \\
\end{array}
\right],\]
we find that
\[\max\{\omega(A),\omega(D)\}+\frac{\omega(B+C)+\omega(B-C)}{2}\approx 9.03276,\]
while
\[\frac{1}{2}\left( \omega \left( A \right)+\omega \left( D \right) \right) 
 +\frac{1}{2}\sqrt{{{\left( \omega \left( A \right)-\omega \left( D \right) \right)}^{2}}+\omega \left( \left| B \right|+i\left| {{C}^{*}} \right| \right)\omega \left( \left| {{B}^{*}} \right|+i\left| C \right| \right)+\left\| B \right\|\left\| C \right\|+\omega \left( CB \right)}\approx 6.50583,\]
 showing how the bound in Corollary \ref{Cor_1} can significantly be better than that in \eqref{Eq_Hirz_op2}.  We point out that all numerical examples we attempted indicate that our bound is better than that in \eqref{Eq_Hirz_op2}; however, we still cannot prove this claim rigorously. 
\end{remark}
\color{black}
\begin{remark}
If we let $B=C=O,$ the zero operator, in Corollary \ref{Cor_1}, we deduce that 
\[\omega \left( \left[ \begin{matrix}
   A & O  \\
   O & D  \\
\end{matrix} \right] \right)\le\max\{\omega(A),\omega(D)\}.\]
On the other hand, if we let $A=D=O,$ we infer that

\begin{equation}\label{Eq_Ned_op1}
\begin{aligned}
   \omega \left( \left[ \begin{matrix}
   O & B  \\
   C & O  \\
\end{matrix} \right] \right)&\le \frac{1}{2}\sqrt{\omega \left( \left| B \right|+i\left| {{C}^{*}} \right| \right)\omega \left( \left| {{B}^{*}} \right|+i\left| C \right| \right)+\left\| B \right\|\left\| C \right\|+\omega \left( CB \right)}.  
\end{aligned}
\end{equation}
 Following the same ideas as in Remark \ref{Rem_1_Bet1}, we can show that
\[\begin{aligned}
&\sqrt{\omega \left( \left| B \right|+i\left| {{C}^{*}} \right| \right)\omega \left( \left| {{B}^{*}} \right|+i\left| C \right| \right)+\left\| B \right\|\left\| C \right\|+\omega \left( CB \right)}\\
&\leq \|B\|+\|C\|.
\end{aligned}
\]
This means that we have shown
\[
\begin{aligned}
\omega \left( \left[ \begin{matrix}
   O & B  \\
   C & O  \\
\end{matrix} \right] \right)&\le \frac{1}{2}\sqrt{\omega \left( \left| B \right|+i\left| {{C}^{*}} \right| \right)\omega \left( \left| {{B}^{*}} \right|+i\left| C \right| \right)+\left\| B \right\|\left\| C \right\|+\omega \left( CB \right)}\\
&\leq\frac{\|B\|+\|C\|}{2}.
\end{aligned}
\]
This indeed provides an interesting refinement of the celebrated bound
\[\omega \left( \left[ \begin{matrix}
   O & B  \\
   C & O  \\
\end{matrix} \right] \right)\le\frac{\|B\|+\|C\|}{2},\]
which has been shown as one of the sharpest bounds for $\omega \left( \left[ \begin{matrix}
   O & B  \\
   C & O  \\
\end{matrix} \right] \right)$ in \cite{Hirzallah_IEOT_2011}. 
\end{remark}

\begin{remark}
If we let $C=D=O$ in Corollary \ref{Cor_1}, we reach
\begin{equation}\label{Eq_Ned_Comp_Sh1}
\omega\left(\left[\begin{matrix}A&B\\O&O\end{matrix}\right]\right]\leq \frac{1}{2}\left(\omega(A)+\sqrt{\omega(A)^2+\|B\|^2}\right).
\end{equation}
On the other hand, it has been shown in \cite{Shebrawi_LAA_2017} that
\begin{equation}\label{Eq_Ned_Comp_Sh2}
\omega\left(\left[\begin{matrix}A&B\\O&O\end{matrix}\right]\right]\leq \frac{1}{2}\left(\|A\|+\|AA^*+BB^*\|^{\frac{1}{2}}\right).
\end{equation}
If we let $A=\left[
\begin{array}{cc}
 3 & 2 \\
 -1 & -3 \\
\end{array}
\right]$ and $B=\left[
\begin{array}{cc}
 1 & 1 \\
 1 & -1 \\
\end{array}
\right],$ we find that
\[\frac{1}{2}\left(\omega(A)+\sqrt{\omega(A)^2+\|B\|^2}\right)\approx 3.19774, \frac{1}{2}\left(\|A\|+\|AA^*+BB^*\|^{\frac{1}{2}}\right)\approx 4.64893,\]
which shows that \eqref{Eq_Ned_Comp_Sh1} can provide better estimates than \eqref{Eq_Ned_Comp_Sh2}. We emphasize that this is not always the case, as other examples reveal.

Moreover, it can be seen that for the above $A$ and $B$, the inequality \eqref{Eq_Ned_Comp_Sh1} is indeed an equality, showing that it is a sharp inequality.

\end{remark}

\begin{remark}
In \cite{Shebrawi_LAA_2017}, it has been shown that
\begin{equation}\label{Eq_Shebrawi_3}
\omega\!\left(
\begin{bmatrix}
A & B \\
C & D
\end{bmatrix}
\right)
\leq \frac{1}{2} \omega(A)
+ \frac{1}{2} \omega(D)
+ \frac{1}{4} \bigl\| I + AA^{*} + BB^{*} \bigr\|
+ \frac{1}{4} \bigl\| I + CC^{*} + DD^{*} \bigr\|.
\end{equation}
If we let \[A=\left[
\begin{array}{cc}
 2 & 1 \\
 -1 & -3 \\
\end{array}
\right], B=\left[
\begin{array}{cc}
 -2 & 0 \\
 -3 & 3 \\
\end{array}
\right], C=\left[
\begin{array}{cc}
 2 & 1 \\
 -3 & -3 \\
\end{array}
\right], D=\left[
\begin{array}{cc}
 2 & -3 \\
 3 & -2 \\
\end{array}
\right],\]
we can see that
\[\frac{1}{2} \omega(A)
+ \frac{1}{2} \omega(D)
+ \frac{1}{4} \bigl\| I + AA^{*} + BB^{*} \bigr\|
+ \frac{1}{4} \bigl\| I + CC^{*} + DD^{*} \bigr\|\approx 18.454,\]
while
\[\begin{aligned}
  \frac{1}{2}&\left( \omega \left( A \right)+\omega \left( D \right) \right) \\ 
 &\quad +\frac{1}{2}\sqrt{{{\left( \omega \left( A \right)-\omega \left( D \right) \right)}^{2}}+\omega \left( \left| B \right|+i\left| {{C}^{*}} \right| \right)\omega \left( \left| {{B}^{*}} \right|+i\left| C \right| \right)+\left\| B \right\|\left\| C \right\|+\omega \left( CB \right)}\approx 7.41238.  
\end{aligned}\]
Therefore, Corollary \ref{Cor_1} provides a better estimate than \eqref{Eq_Shebrawi_3} in this example.
\end{remark}

As an immediate consequence of \eqref{Eq_Ned_op1}, we can have the following upper bound for $\omega(B)$. This follows from \eqref{Eq_Ned_op1} on letting $C=B.$ While this provides an upper bound for $\omega(B)$, it also provides a reversed form of the power inequality $\omega(B^2)\leq \omega(B)^2.$
\begin{corollary}\label{7}
Let $B\in \mathbb B\left( {{\mathbb H}} \right)$. Then
\[\omega \left( B \right)\le \frac{1}{2}\sqrt{{{\omega }^{2}}\left( \left| B \right|+i\left| {{B}^{*}} \right| \right)+{{\left\| B \right\|}^{2}}+\omega \left( {{B}^{2}} \right)}.\]
\end{corollary}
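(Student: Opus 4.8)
The plan is to obtain the stated bound as a direct specialization of inequality \eqref{Eq_Ned_op1}, after first relating $\omega(B)$ to the numerical radius of the $2\times 2$ off-diagonal operator matrix $\left[\begin{matrix} O & B \\ B & O\end{matrix}\right]$.

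The first step is the elementary bound $\omega(B)\le \omega\left(\left[\begin{matrix} O & B \\ B & O\end{matrix}\right]\right)$. Given a unit vector $x\in\mathbb H$, the vector $\frac{1}{\sqrt2}\,(x\oplus x)\in\mathbb H\oplus\mathbb H$ is again a unit vector, and a one-line computation gives $\left\langle\left[\begin{matrix} O & B \\ B & O\end{matrix}\right]\frac{1}{\sqrt2}(x\oplus x),\frac{1}{\sqrt2}(x\oplus x)\right\rangle=\left\langle Bx,x\right\rangle$; taking the supremum over all unit $x$ yields the claim. (One in fact has equality, since $\frac{1}{\sqrt2}\left[\begin{matrix} I & I \\ I & -I\end{matrix}\right]$ is a unitary conjugating $\left[\begin{matrix} O & B \\ B & O\end{matrix}\right]$ to $\left[\begin{matrix} B & O \\ O & -B\end{matrix}\right]$, whose numerical radius is $\omega(B)$; but the inequality alone is all we need.)

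The second step is to apply \eqref{Eq_Ned_op1} with $C=B$. Since then $|C|=|B|$, $|C^*|=|B^*|$, $\|B\|\,\|C\|=\|B\|^2$ and $\omega(CB)=\omega(B^2)$, that inequality reads
\[\omega\left(\left[\begin{matrix} O & B \\ B & O\end{matrix}\right]\right)\le \frac12\sqrt{\omega\left(|B|+i|B^*|\right)\,\omega\left(|B^*|+i|B|\right)+\|B\|^2+\omega(B^2)}.\]
It remains to simplify the product of the two numerical radii. Writing $|B^*|+i|B|=i\left(|B|-i|B^*|\right)$ and using $\omega(\lambda T)=|\lambda|\,\omega(T)$ together with $\omega(T^*)=\omega(T)$ applied to $|B|-i|B^*|=\left(|B|+i|B^*|\right)^*$ (both $|B|$ and $|B^*|$ being self-adjoint), we get $\omega\left(|B^*|+i|B|\right)=\omega\left(|B|+i|B^*|\right)$. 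Substituting collapses the product to $\omega^2\left(|B|+i|B^*|\right)$, and combining with the first step gives exactly the asserted bound.

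There is no real obstacle here, as this is a corollary; the only points requiring a little care are the complex-conjugation and factor-of-$i$ bookkeeping in the identity $\omega\left(|B^*|+i|B|\right)=\omega\left(|B|+i|B^*|\right)$, and (if one wants the sharper equality $\omega(B)=\omega\left(\left[\begin{matrix} O & B \\ B & O\end{matrix}\right]\right)$ in the first step) the fact that the numerical radius of a block-diagonal operator equals the maximum of the numerical radii of its diagonal blocks.
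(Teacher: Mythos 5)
Your proposal is correct and follows essentially the same route as the paper: specialize \eqref{Eq_Ned_op1} to $C=B$, identify $\omega(B)$ with the numerical radius of the off-diagonal block matrix $\left[\begin{smallmatrix} O & B \\ B & O\end{smallmatrix}\right]$, and collapse the product $\omega\left(|B|+i|B^*|\right)\omega\left(|B^*|+i|B|\right)$ to a square via the same factor-of-$i$/adjoint manipulation. The only cosmetic difference is that you supply a short proof of the (in)equality $\omega(B)\le\omega\left(\left[\begin{smallmatrix} O & B \\ B & O\end{smallmatrix}\right]\right)$, which the paper simply invokes as a known identity.
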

\begin{proof}
Letting $C=B$ in \eqref{Eq_Ned_op1}, we get
	\[\begin{aligned}
   \omega \left( B \right)&=\omega \left( \left[ \begin{matrix}
   O & B  \\
   B & O  \\
\end{matrix} \right] \right) \\ 
 & \le \frac{1}{2}\sqrt{\omega \left( \left| B \right|+i\left| {{B}^{*}} \right| \right)\omega \left( \left| {{B}^{*}} \right|+i\left| B \right| \right)+{{\left\| B \right\|}^{2}}+\omega \left( {{B}^{2}} \right)} \\ 
 & =\frac{1}{2}\sqrt{\omega \left( \left| B \right|+i\left| {{B}^{*}} \right| \right)\omega \left( i\left( \left| {{B}^{*}} \right|+i\left| B \right| \right) \right)+{{\left\| B \right\|}^{2}}+\omega \left( {{B}^{2}} \right)} \\ 
 & =\frac{1}{2}\sqrt{\omega \left( \left| B \right|+i\left| {{B}^{*}} \right| \right)\omega \left( i\left| {{B}^{*}} \right|-\left| B \right| \right)+{{\left\| B \right\|}^{2}}+\omega \left( {{B}^{2}} \right)} \\ 
 & =\frac{1}{2}\sqrt{\omega \left( \left| B \right|+i\left| {{B}^{*}} \right| \right)\omega \left( -\left( i\left| {{B}^{*}} \right|-\left| B \right| \right) \right)+{{\left\| B \right\|}^{2}}+\omega \left( {{B}^{2}} \right)} \\ 
 & =\frac{1}{2}\sqrt{\omega \left( \left| B \right|+i\left| {{B}^{*}} \right| \right)\omega \left( -i\left| {{B}^{*}} \right|+\left| B \right| \right)+{{\left\| B \right\|}^{2}}+\omega \left( {{B}^{2}} \right)} \\ 
 & =\frac{1}{2}\sqrt{\omega \left( \left| B \right|+i\left| {{B}^{*}} \right| \right)\omega \left( {{\left( -i\left| {{B}^{*}} \right|+\left| B \right| \right)}^{*}} \right)+{{\left\| B \right\|}^{2}}+\omega \left( {{B}^{2}} \right)} \\ 
 & =\frac{1}{2}\sqrt{\omega \left( \left| B \right|+i\left| {{B}^{*}} \right| \right)\omega \left( i\left| {{B}^{*}} \right|+\left| B \right| \right)+{{\left\| B \right\|}^{2}}+\omega \left( {{B}^{2}} \right)} \\ 
 & =\frac{1}{2}\sqrt{{{\omega }^{2}}\left( \left| B \right|+i\left| {{B}^{*}} \right| \right)+{{\left\| B \right\|}^{2}}+\omega \left( {{B}^{2}} \right)},  
\end{aligned}\]
which completes the proof.
\end{proof}
\begin{remark}
Utilizing \eqref{Eq_Fil}, and following the calculations in Remark \ref{Rem_1_Bet1}, we can see that
\begin{align*}
\sqrt{{{\omega }^{2}}\left( \left| B \right|+i\left| {{B}^{*}} \right| \right)+{{\left\| B \right\|}^{2}}+\omega \left( {{B}^{2}} \right)}&\leq \sqrt{\|\;|B|^2\|+\|\;|B^*|^2\|+\|B\|^2+\omega(B^2)}\\
&\leq \sqrt{\|B\|^2+\|B^*\|^2+\|B\|^2+\|B^2\|}\\
&\leq 2\|B\|.
\end{align*}
Therefore, we deduce from Corollary \ref{7} that
\begin{align*}
\omega \left( B \right)&\le \frac{1}{2}\sqrt{{{\omega }^{2}}\left( \left| B \right|+i\left| {{B}^{*}} \right| \right)+{{\left\| B \right\|}^{2}}+\omega \left( {{B}^{2}} \right)}\\
&\leq\|B\|,
\end{align*}
which is a refinement of the second inequality in \eqref{Eq_Equiv}.

A celebrated sharp upper bound for $\omega(B)$ is given by \cite{Kittaneh_Studia_2003}
\begin{equation}\label{Eq_Kitt_Stud}
\omega(B)\leq\frac{1}{2}\|\;|B|+|B^*|\;\|.
\end{equation}
If we let $B=\left[
\begin{array}{cc}
 -4 & 7 \\
 -4 & -8 \\
\end{array}
\right],$ we find that
\[\omega(B)\approx 8.69626, \frac{1}{2}\sqrt{{{\omega }^{2}}\left( \left| B \right|+i\left| {{B}^{*}} \right| \right)+{{\left\| B \right\|}^{2}}+\omega \left( {{B}^{2}} \right)}\approx 9.74488,\]
and $\frac{1}{2}\|\;|B|+|B^*|\;\|\approx 9.9823,$ showing that the bound we have in Corollary \ref{7} can provide better estimates than \eqref{Eq_Kitt_Stud}. However, this is not always the case, as other examples reveal the opposite conclusion.
\end{remark}

\color{black}
From Theorem \ref{6}, we may state the following $2\times 2$ version. 
\begin{corollary}\label{Cor_2}
Let $A\in \mathbb B\left( {{\mathbb H}_{1}} \right)$, $B\in \mathbb B\left( {{\mathbb H}_{2}},{{\mathbb H}_{1}} \right)$, $C\in \mathbb B\left( {{\mathbb H}_{1}},{{\mathbb H}_{2}} \right)$, and $D\in \mathbb B\left( {{\mathbb H}_{2}} \right)$. Then
{\small
\[\begin{aligned}
   \omega \left( \left[ \begin{matrix}
   A & B  \\
   C & D  \\
\end{matrix} \right] \right)&\le \frac{1}{2}\left( \omega \left( A \right)+\omega \left( D \right) \right) \\ 
 &\quad +\frac{1}{2}\sqrt{{{\left( \omega \left( A \right)-\omega \left( D \right) \right)}^{2}}+\omega \left( \left| B \right|+i\left| {{C}^{*}} \right| \right)\omega \left( \left| {{B}^{*}} \right|+i\left| C \right| \right)+\frac{1}{2}\left\| {{\left| B \right|}^{2}}+{{\left| {{C}^{*}} \right|}^{2}} \right\|+\omega \left( CB \right)}.  
\end{aligned}\]}
\end{corollary}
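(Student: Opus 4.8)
The plan is to specialize Theorem \ref{6} to the case $n=2$ and then compute the numerical radius of the resulting $2\times 2$ scalar matrix explicitly. Identifying $T_{11}=A$, $T_{12}=B$, $T_{21}=C$, $T_{22}=D$, so that $\left[\begin{matrix} A & B \\ C & D\end{matrix}\right]=[T_{ij}]_{i,j=1}^{2}$ in the notation of Theorem \ref{6}, that theorem gives
\[\omega\left(\left[\begin{matrix} A & B \\ C & D\end{matrix}\right]\right)\le \omega(M),\qquad M=\left[\begin{matrix} \omega(A) & \beta_{12} \\ 0 & \omega(D)\end{matrix}\right],\]
where $\beta_{12}=\sqrt{\omega(|B|+i|C^{*}|)\,\omega(|B^{*}|+i|C|)+\frac12\||B|^{2}+|C^{*}|^{2}\|+\omega(CB)}\ge 0$.

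Next I would evaluate $\omega(M)$. Since $\omega(A)$, $\omega(D)$, and $\beta_{12}$ are all nonnegative, $M$ is a $2\times 2$ matrix with nonnegative entries, so by the identity $\omega(T)=\|\Re(T)\|$ for such matrices recalled in the introduction (\cite{Goldberg_LAA_1975}) one has $\omega(M)=\|\Re(M)\|$, where
\[\Re(M)=\left[\begin{matrix} \omega(A) & \tfrac12\beta_{12} \\ \tfrac12\beta_{12} & \omega(D)\end{matrix}\right]\]
is a real symmetric matrix. Its eigenvalues are $\tfrac12(\omega(A)+\omega(D))\pm\tfrac12\sqrt{(\omega(A)-\omega(D))^{2}+\beta_{12}^{2}}$; writing $p=\tfrac12(\omega(A)+\omega(D))\ge 0$ and $q=\tfrac12\sqrt{(\omega(A)-\omega(D))^{2}+\beta_{12}^{2}}\ge 0$, the spectral norm $\|\Re(M)\|=\max\{|p+q|,|p-q|\}$ equals $p+q$ since $p,q\ge 0$. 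Substituting the value of $\beta_{12}^{2}$ then yields exactly the bound asserted in Corollary \ref{Cor_2}.

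There is essentially no genuine obstacle here; the only points that merit a word of justification are that $M$ really does have nonnegative entries, so that the identity from \cite{Goldberg_LAA_1975} applies and converts a numerical radius into an operator norm, and that the spectral norm of a nonnegative real symmetric $2\times 2$ matrix is its larger eigenvalue, which is immediate from $|p-q|\le p+q$. Otherwise the argument is identical in structure to the derivation of Corollary \ref{Cor_1} from Theorem \ref{5}, with Lemma \ref{4} (via Theorem \ref{6}) replacing Lemma \ref{2} (via Theorem \ref{5}).
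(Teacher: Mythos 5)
Your proposal is correct and follows exactly the route the paper intends: specialize Theorem \ref{6} to $n=2$, obtain the nonnegative upper-triangular matrix $\left[\begin{smallmatrix}\omega(A) & \beta_{12}\\ 0 & \omega(D)\end{smallmatrix}\right]$, and evaluate its numerical radius via $\omega(M)=\|\Re(M)\|$ for entrywise-nonnegative matrices. The eigenvalue computation is accurate and yields the stated bound, so there is nothing to add.
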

\begin{remark}
Based on our discussion in Remark \ref{Rem_222}, we can easily see that the two bounds in Corollaries \ref{Cor_1} and \ref{Cor_2} are incomparable, in general.
\end{remark}

Letting $A=D=O$ in Corollary $2\times 2$, and following a similar approach as in Corollary \ref{2}, we can state the following.
\begin{corollary}\label{8}
Let $T\in \mathbb B\left( {{\mathbb H}} \right)$. Then
\[\omega \left( T \right)\le \frac{1}{2}\sqrt{{{\omega }^{2}}\left( \left| T \right|+i\left| {{T}^{*}} \right| \right)+\frac{1}{2}\left\| {{\left| T \right|}^{2}}+{{\left| {{T}^{*}} \right|}^{2}} \right\|+\omega \left( {{T}^{2}} \right)}.\]
\end{corollary}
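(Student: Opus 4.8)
The plan is to obtain this estimate as a direct specialization of Corollary \ref{Cor_2}, in exact analogy with the way Corollary \ref{7} was deduced from \eqref{Eq_Ned_op1}. So the first step is to put $A=D=O$ and $B=C=T$ in Corollary \ref{Cor_2}. Since then $\omega(A)=\omega(D)=0$, both the term $\frac{1}{2}\left(\omega(A)+\omega(D)\right)$ and the term $\left(\omega(A)-\omega(D)\right)^{2}$ disappear; moreover $|B|^{2}=|T|^{2}$, $|C^{*}|^{2}=|T^{*}|^{2}$, and $CB=T^{2}$. This reduces the bound of Corollary \ref{Cor_2} to
\[
\omega\left(\left[\begin{matrix} O & T \\ T & O \end{matrix}\right]\right)\le \frac{1}{2}\sqrt{\omega\left(|T|+i|T^{*}|\right)\,\omega\left(|T^{*}|+i|T|\right)+\frac{1}{2}\left\|\,|T|^{2}+|T^{*}|^{2}\right\|+\omega\left(T^{2}\right)}.
\]

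The second step is to replace the left-hand side by $\omega(T)$, using the identity $\omega\left(\left[\begin{matrix} O & T \\ T & O \end{matrix}\right]\right)=\omega(T)$, which is exactly the identity already invoked at the start of the proof of Corollary \ref{7}. The third step is to collapse the cross term $\omega\left(|T|+i|T^{*}|\right)\,\omega\left(|T^{*}|+i|T|\right)$ into $\omega^{2}\left(|T|+i|T^{*}|\right)$. Since $|T|$ and $|T^{*}|$ are self-adjoint, we have $|T^{*}|+i|T|=i\left(|T|-i|T^{*}|\right)=i\left(|T|+i|T^{*}|\right)^{*}$, so by $\omega(\lambda S)=|\lambda|\,\omega(S)$ together with $\omega(S^{*})=\omega(S)$ we get $\omega\left(|T^{*}|+i|T|\right)=\omega\left(|T|+i|T^{*}|\right)$. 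This is precisely the chain of equalities carried out in the proof of Corollary \ref{7}, with $B$ there replaced by $T$. Substituting this simplification into the displayed inequality yields the asserted bound.

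I do not anticipate any real obstacle here: the statement is a specialization of an already established corollary, and the only manipulation that needs (routine) care is the rewriting of $\omega\left(|T^{*}|+i|T|\right)$ as $\omega\left(|T|+i|T^{*}|\right)$, which we handle exactly as in Corollary \ref{7}. The proof could therefore be presented in just a few lines, mirroring the structure of the proof of Corollary \ref{7}.
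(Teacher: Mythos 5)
Your proposal is correct and follows essentially the same route the paper intends: the paper derives Corollary \ref{8} by setting $A=D=O$ in Corollary \ref{Cor_2} and then repeating the argument of Corollary \ref{7} (taking $C=B=T$, using $\omega\left(\left[\begin{smallmatrix} O & T \\ T & O\end{smallmatrix}\right]\right)=\omega(T)$ and the identity $\omega(|T^{*}|+i|T|)=\omega(|T|+i|T^{*}|)$). Your handling of the cross term via $|T^{*}|+i|T|=i\left(|T|+i|T^{*}|\right)^{*}$ matches the paper's chain of equalities exactly.
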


\begin{remark}
Of course Corollary \ref{8} improves Corollary \ref{7}.
\end{remark}

\color{black}
Now we are ready to present the following bound for the spectral radius of the sum of products of operators.
\begin{theorem}\label{Thm_1_r}
Let ${{A}_{i}}\in \mathbb B\left( {{\mathbb H}_{i}},{{\mathbb H}_{1}} \right)$ and let ${{B}_{i}}\in \mathbb B\left( {{\mathbb H}_{1}},{{\mathbb H}_{i}} \right)$. Then
\[r\left( \sum\limits_{i=1}^{n}{{{A}_{i}}{{B}_{i}}} \right)\le \omega \left( \left[ {{\gamma }_{ij}} \right]_{i,j=1}^{n} \right),\]
where
{\small
\begin{equation*}
{{\gamma }_{ij}}=\left\{ \begin{array}{ll}
   \omega \left( {{B}_{i}}{{A}_{i}} \right)&\text{ if }i=j \\ 
  \sqrt{\omega \left( \left| {{B}_{i}}{{A}_{j}} \right|+i\left| A_{i}^{*}B_{j}^{*} \right| \right)\omega \left( \left| A_{j}^{*}B_{i}^{*} \right|+i\left| {{B}_{j}}{{A}_{i}} \right| \right)+\left\| {{B}_{i}}{{A}_{j}} \right\|\left\| {{B}_{j}}{{A}_{i}} \right\|+\omega \left( {{B}_{j}}{{A}_{i}}{{B}_{i}}{{A}_{j}} \right)}&\text{ if }i<j \\ 
  0&\text{ if }i>j \\ 
\end{array} \right..
\end{equation*}}
\end{theorem}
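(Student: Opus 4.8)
\textit{Proof proposal.}

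The plan is to represent the sum $\sum_{i=1}^{n}A_iB_i$ as a single product of two block operators, exploit the invariance of the spectral radius under interchanging the two factors, and then invoke Theorem \ref{5}. Concretely, I would introduce the row operator $S=\begin{bmatrix} A_1 & A_2 & \cdots & A_n\end{bmatrix}\in\mathbb B\left(\bigoplus_{i=1}^{n}\mathbb H_i,\mathbb H_1\right)$ and the column operator $R=\begin{bmatrix} B_1 \\ B_2 \\ \vdots \\ B_n\end{bmatrix}\in\mathbb B\left(\mathbb H_1,\bigoplus_{i=1}^{n}\mathbb H_i\right)$. A direct computation of the block products shows $SR=\sum_{i=1}^{n}A_iB_i\in\mathbb B(\mathbb H_1)$, while $RS=\left[B_iA_j\right]_{i,j=1}^{n}\in\mathbb B\left(\bigoplus_{i=1}^{n}\mathbb H_i\right)$ is precisely an $n\times n$ operator matrix whose $(i,j)$ entry lies in $\mathbb B(\mathbb H_j,\mathbb H_i)$.

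The next step is the classical fact that for bounded operators $S,R$ between (possibly different) Hilbert spaces one has $\sigma(SR)\setminus\{0\}=\sigma(RS)\setminus\{0\}$, hence $r(SR)=r(RS)$. Applying this gives
\[
r\left(\sum_{i=1}^{n}A_iB_i\right)=r\left(\left[B_iA_j\right]_{i,j=1}^{n}\right).
\]
Since $r(X)\le\omega(X)$ for every $X\in\mathbb B(\mathbb H)$, it follows that $r\left(\sum_{i=1}^{n}A_iB_i\right)\le\omega\left(\left[B_iA_j\right]_{i,j=1}^{n}\right)$.

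Finally I would apply Theorem \ref{5} to the operator matrix $T=\left[T_{ij}\right]$ with $T_{ij}=B_iA_j$. Then $T_{ii}=B_iA_i$, $T_{ij}^{*}=A_j^{*}B_i^{*}$, $T_{ji}^{*}=A_i^{*}B_j^{*}$, and $T_{ji}T_{ij}=B_jA_iB_iA_j$, so the quantity $\alpha_{ij}$ appearing in Theorem \ref{5} is exactly the quantity $\gamma_{ij}$ in the present statement. Theorem \ref{5} therefore yields $\omega\left(\left[B_iA_j\right]_{i,j=1}^{n}\right)\le\omega\left(\left[\gamma_{ij}\right]_{i,j=1}^{n}\right)$, and combining this with the previous chain of inequalities finishes the argument.

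I expect the only genuinely non-bookkeeping point to be the spectral-radius identity $r(SR)=r(RS)$ for operators acting between different spaces; once that is in hand (it is standard, e.g. via the resolvent identity relating $(\lambda-SR)^{-1}$ and $(\lambda-RS)^{-1}$ for $\lambda\neq0$), the rest is a matter of correctly matching the absolute values and products in $\gamma_{ij}$ against the definition of $\alpha_{ij}$ from Theorem \ref{5}.
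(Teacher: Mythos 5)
Your proposal is correct and follows essentially the same route as the paper: write the sum as a product of a block row and a block column, use $r(XY)=r(YX)$ together with $r\le\omega$, and then apply Theorem \ref{5} to $[B_iA_j]$. The only cosmetic difference is that the paper pads $S$ and $R$ with zero blocks to make them square operators on $\oplus_{i=1}^{n}\mathbb H_i$ (so that the standard same-space identity $r(AB)=r(BA)$ applies directly), whereas you invoke the rectangular version $\sigma(SR)\setminus\{0\}=\sigma(RS)\setminus\{0\}$; both are fine.
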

\begin{proof}
Letting 
\[A=\left[ \begin{matrix}
   {{A}_{1}} & {{A}_{2}} & \cdots  & {{A}_{n}}  \\
   O & O & \cdots  & O  \\
   \vdots  & \vdots  & \ddots  & \vdots   \\
   O & O & \cdots  & O  \\
\end{matrix} \right],B=\left[ \begin{matrix}
   {{B}_{1}} & O & \cdots  & O  \\
   {{B}_{2}} & O & \cdots  & O  \\
   \vdots  & \vdots  & \ddots  & \vdots   \\
   {{B}_{n}} & O & \cdots  & O  \\
\end{matrix} \right]\in \mathbb B\left( \oplus _{i=1}^{n}{{\mathbb H}_{i}} \right),\]
we get
	\[r\left( \sum\limits_{j=1}^{n}{{{A}_{j}}{{B}_{j}}} \right)=r\left( \sum\limits_{j=1}^{n}{{{A}_{j}}{{B}_{j}}\oplus O} \right)=r\left( AB \right)=r\left( BA \right)\le \omega \left( BA \right).\]
Now, we infer the result by Theorem \ref{5}.
\end{proof}
As an immediate application of Theorem \ref{Thm_1_r}, we derive the following spectral radius bound.
\begin{corollary}
Let $A_1,A_2,B_1,B_2\in\mathbb{B}(\mathbb{H}).$ Then
{\scriptsize	
\[\begin{aligned}
  & r\left( {{A}_{1}}{{B}_{1}}+{{A}_{2}}{{B}_{2}} \right) \\ 
 & \le \frac{1}{2}\left( \omega \left( {{B}_{1}}{{A}_{1}} \right)+\omega \left( {{B}_{2}}{{A}_{2}} \right) \right) \\ 
 &\quad +\frac{1}{2}\sqrt{{{\left( \omega \left( {{B}_{1}}{{A}_{1}} \right)-\omega \left( {{B}_{2}}{{A}_{2}} \right) \right)}^{2}}+\omega \left( \left| {{B}_{1}}{{A}_{2}} \right|+i\left| A_{1}^{*}B_{2}^{*} \right| \right)\omega \left( \left| A_{2}^{*}B_{1}^{*} \right|+i\left| {{B}_{2}}{{A}_{1}} \right| \right)+\left\| {{B}_{1}}{{A}_{2}} \right\|\left\| {{B}_{2}}{{A}_{1}} \right\|+\omega \left( {{B}_{2}}{{A}_{1}}{{B}_{1}}{{A}_{2}} \right)}. 
\end{aligned}\]
}
\end{corollary}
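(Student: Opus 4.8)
The plan is to obtain this corollary as the case $n=2$ (with every $\mathbb{H}_i=\mathbb{H}$) of Theorem \ref{Thm_1_r}, and then to write the resulting $2\times2$ numerical radius out explicitly. Applying Theorem \ref{Thm_1_r} gives at once
\[
r\left({{A}_{1}}{{B}_{1}}+{{A}_{2}}{{B}_{2}}\right)\le \omega\left(\left[{{\gamma}_{ij}}\right]_{i,j=1}^{2}\right),
\]
where ${{\gamma}_{11}}=\omega\left({{B}_{1}}{{A}_{1}}\right)$, ${{\gamma}_{22}}=\omega\left({{B}_{2}}{{A}_{2}}\right)$, ${{\gamma}_{21}}=0$, and ${{\gamma}_{12}}$ equals the square-root expression on the right-hand side of the asserted inequality; here one simply rewrites $\left|{{\left({{B}_{2}}{{A}_{1}}\right)}^{*}}\right|=\left|A_{1}^{*}B_{2}^{*}\right|$, $\left|{{\left({{B}_{1}}{{A}_{2}}\right)}^{*}}\right|=\left|A_{2}^{*}B_{1}^{*}\right|$, and $\left({{B}_{2}}{{A}_{1}}\right)\left({{B}_{1}}{{A}_{2}}\right)={{B}_{2}}{{A}_{1}}{{B}_{1}}{{A}_{2}}$ to match the notation in the statement. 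So the whole task reduces to computing $\omega\left(\left[{{\gamma}_{ij}}\right]_{i,j=1}^{2}\right)$, the numerical radius of an upper triangular $2\times2$ scalar matrix with nonnegative entries ${{\gamma}_{11}},{{\gamma}_{12}},{{\gamma}_{22}}$.

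For that I would use the fact recalled in the introduction from \cite{Goldberg_LAA_1975}: a matrix with nonnegative entries $T$ satisfies $\omega(T)=\|\Re(T)\|$. Applied to our matrix, its real part is the real symmetric $2\times2$ matrix with diagonal ${{\gamma}_{11}},{{\gamma}_{22}}$ and off-diagonal ${{\gamma}_{12}}/2$, whose norm is its larger eigenvalue $\frac12\left({{\gamma}_{11}}+{{\gamma}_{22}}\right)+\frac12\sqrt{{{\left({{\gamma}_{11}}-{{\gamma}_{22}}\right)}^{2}}+\gamma_{12}^{2}}$. Substituting the values of ${{\gamma}_{11}},{{\gamma}_{22}},{{\gamma}_{12}}$ produces precisely the claimed bound. (Equivalently, by the elliptical range theorem the numerical range here is the ellipse with foci ${{\gamma}_{11}},{{\gamma}_{22}}$ and minor semi-axis ${{\gamma}_{12}}/2$, and the numerical radius is the distance from the origin to the farther real vertex, giving the same value; this is the computation that also underlies Corollary \ref{Cor_1}.)

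I do not expect a genuine obstacle: the reduction $r\left(\sum_i {{A}_{i}}{{B}_{i}}\right)\le\omega\left(\left[{{\gamma}_{ij}}\right]\right)$ is exactly Theorem \ref{Thm_1_r}, and the numerical radius of a $2\times2$ nonnegative matrix is classical. The only points needing a line of care are that the off-diagonal entry ${{\gamma}_{12}}$ is nonnegative (it is a square root), so that the result of \cite{Goldberg_LAA_1975} applies, and that it is the farther vertex of the ellipse that realizes the numerical radius, which holds because ${{\gamma}_{11}}+{{\gamma}_{22}}\ge 0$. As an alternative, equally short route one may avoid Theorem \ref{Thm_1_r} and argue directly: with the block operators $A=\left[ \begin{matrix} {{A}_{1}} & {{A}_{2}} \\ O & O \end{matrix} \right]$ and $B=\left[ \begin{matrix} {{B}_{1}} & O \\ {{B}_{2}} & O \end{matrix} \right]$ one has $r\left({{A}_{1}}{{B}_{1}}+{{A}_{2}}{{B}_{2}}\right)=r(BA)\le\omega(BA)$ and $BA=\left[ \begin{matrix} {{B}_{1}}{{A}_{1}} & {{B}_{1}}{{A}_{2}} \\ {{B}_{2}}{{A}_{1}} & {{B}_{2}}{{A}_{2}} \end{matrix} \right]$, after which Corollary \ref{Cor_1}, applied with block entries ${{B}_{1}}{{A}_{1}},{{B}_{1}}{{A}_{2}},{{B}_{2}}{{A}_{1}},{{B}_{2}}{{A}_{2}}$, gives the result immediately.
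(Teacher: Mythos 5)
Your proposal is correct and follows exactly the route the paper intends: the corollary is stated there as an immediate application of Theorem \ref{Thm_1_r} with $n=2$, and your explicit computation of $\omega\left(\left[\gamma_{ij}\right]_{i,j=1}^{2}\right)$ via the Goldberg--Tadmor--Zwas identity $\omega(T)=\|\Re(T)\|$ for nonnegative matrices correctly fills in the unwritten step, matching how Corollary \ref{Cor_1} is extracted from Theorem \ref{5}. Your observations that $\gamma_{12}\ge 0$ and $\gamma_{11}+\gamma_{22}\ge 0$ are exactly the points needed to justify taking the larger eigenvalue, and your alternative direct route through $r(BA)\le\omega(BA)$ and Corollary \ref{Cor_1} is the same argument as the paper's proof of Theorem \ref{Thm_1_r} specialized to $n=2$.
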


On the other hand, Theorem \ref{6} implies the following other bound.
\begin{theorem}\label{Thm_r_2}
Let ${{A}_{i}}\in \mathbb B\left( {{\mathbb H}_{i}},{{\mathbb H}_{1}} \right)$ and let ${{B}_{i}}\in \mathbb B\left( {{\mathbb H}_{1}},{{\mathbb H}_{i}} \right)$. Then
\[r\left( \sum\limits_{i=1}^{n}{{{A}_{i}}{{B}_{i}}} \right)\le \omega \left( \left[ {{\lambda }_{ij}} \right]_{i,j=1}^{n} \right),\]
where
{\footnotesize
\begin{equation*}
{{\lambda }_{ij}}=\left\{ \begin{array}{ll}
   \omega \left( {{B}_{i}}{{A}_{i}} \right)&\text{ if }i=j \\ 
  \sqrt{\omega \left( \left| {{B}_{i}}{{A}_{j}} \right|+i\left| A_{i}^{*}B_{j}^{*} \right| \right)\omega \left( \left| A_{j}^{*}B_{i}^{*} \right|+i\left| {{B}_{j}}{{A}_{i}} \right| \right)+\frac{1}{2}\left\| {{\left| {{B}_{i}}{{A}_{j}} \right|}^{2}}+{{\left| A_{i}^{*}B_{j}^{*} \right|}^{2}} \right\|+\omega \left( {{B}_{j}}{{A}_{i}}{{B}_{i}}{{A}_{j}} \right)}&\text{ if }i<j \\ 
  0&\text{ if }i>j \\ 
\end{array} \right..
\end{equation*}
}
\end{theorem}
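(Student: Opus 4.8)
The plan is to follow the proof of Theorem~\ref{Thm_1_r} line by line, replacing the invocation of Theorem~\ref{5} at the end by an invocation of Theorem~\ref{6}. First I would introduce the two block operators
\[
A=\left[ \begin{matrix}
   A_{1} & A_{2} & \cdots & A_{n}  \\
   O & O & \cdots & O  \\
   \vdots & \vdots & \ddots & \vdots \\
   O & O & \cdots & O  \\
\end{matrix} \right],\qquad
B=\left[ \begin{matrix}
   B_{1} & O & \cdots & O  \\
   B_{2} & O & \cdots & O  \\
   \vdots & \vdots & \ddots & \vdots \\
   B_{n} & O & \cdots & O  \\
\end{matrix} \right],
\]
viewed as elements of $\mathbb B\left( \oplus_{i=1}^{n}\mathbb H_{i} \right)$; this is legitimate because $A_{j}\in\mathbb B(\mathbb H_{j},\mathbb H_{1})$ occupies the $(1,j)$ block and $B_{i}\in\mathbb B(\mathbb H_{1},\mathbb H_{i})$ occupies the $(i,1)$ block, so the block types are compatible with operators on $\oplus_{i=1}^{n}\mathbb H_{i}$.

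Next I would record the chain
\[
r\left( \sum_{i=1}^{n} A_{i}B_{i} \right)
= r\left( \sum_{i=1}^{n} A_{i}B_{i}\oplus O \right)
= r(AB) = r(BA)\le \omega(BA),
\]
where the middle step uses the commutation property $r(XY)=r(YX)$ of the spectral radius and the last step uses $r(\cdot)\le\omega(\cdot)$. Then, since $B$ has nonzero blocks only in its first column and $A$ only in its first row, block multiplication gives $(BA)_{ij}=B_{i}A_{j}$; that is, $BA=\left[ B_{i}A_{j} \right]_{i,j=1}^{n}$, an $n\times n$ operator matrix whose $(i,j)$ entry lies in $\mathbb B(\mathbb H_{j},\mathbb H_{i})$.

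Finally I would apply Theorem~\ref{6} to the operator matrix $\left[ B_{i}A_{j} \right]_{i,j=1}^{n}$, setting $T_{ij}=B_{i}A_{j}$. The diagonal weights are then $\omega(T_{ii})=\omega(B_{i}A_{i})$, and for $i<j$, using $T_{ij}^{*}=A_{j}^{*}B_{i}^{*}$, $T_{ji}^{*}=A_{i}^{*}B_{j}^{*}$, $|T_{ij}|=|B_{i}A_{j}|$, $|T_{ji}^{*}|=|A_{i}^{*}B_{j}^{*}|$ and $T_{ji}T_{ij}=B_{j}A_{i}B_{i}A_{j}$, the weights coincide exactly with the stated $\lambda_{ij}$. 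Combining this with $r\left( \sum_{i} A_{i}B_{i} \right)\le\omega(BA)$ from the previous step yields the asserted inequality. I do not expect a genuine obstacle: the argument is a routine transcription of the proof of Theorem~\ref{Thm_1_r}, and the only points demanding a little care are checking that $A$ and $B$ are bona fide operators on $\oplus_{i=1}^{n}\mathbb H_{i}$ with compatible block types and carefully matching the adjoints and absolute values produced by Theorem~\ref{6} against the definition of $\lambda_{ij}$.
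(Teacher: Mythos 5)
Your proposal is correct and coincides with the paper's intended argument: the paper derives Theorem \ref{Thm_r_2} from Theorem \ref{6} by exactly the same block construction used for Theorem \ref{Thm_1_r}, namely $r\left(\sum_i A_iB_i\right)=r(AB)=r(BA)\le\omega(BA)$ with $BA=[B_iA_j]$. Your bookkeeping of the adjoints and absolute values ($T_{ij}^*=A_j^*B_i^*$, $|T_{ji}^*|=|A_i^*B_j^*|$, $T_{ji}T_{ij}=B_jA_iB_iA_j$) matches the stated $\lambda_{ij}$ exactly.
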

As a consequence of Theorem \ref{Thm_r_2}, we reach the following other bound for the spectral radius.
\begin{corollary}
Let $A_1,A_2,B_1,B_2\in\mathbb{B}(\mathbb{H})$. Then
{\scriptsize
\[\begin{aligned}
  & r\left( {{A}_{1}}{{B}_{1}}+{{A}_{2}}{{B}_{2}} \right) \\ 
 & \le \frac{1}{2}\left( \omega \left( {{B}_{1}}{{A}_{1}} \right)+\omega \left( {{B}_{2}}{{A}_{2}} \right) \right) \\ 
 &\quad +\frac{1}{2}\sqrt{{{\left( \omega \left( {{B}_{1}}{{A}_{1}} \right)-\omega \left( {{B}_{2}}{{A}_{2}} \right) \right)}^{2}}+\omega \left( \left| {{B}_{1}}{{A}_{2}} \right|+i\left| A_{1}^{*}B_{2}^{*} \right| \right)\omega \left( \left| A_{2}^{*}B_{1}^{*} \right|+i\left| {{B}_{2}}{{A}_{1}} \right| \right)+\frac{1}{2}\left\| {{\left| {{B}_{1}}{{A}_{2}} \right|}^{2}}+{{\left| A_{1}^{*}B_{2}^{*} \right|}^{2}} \right\|+\omega \left( {{B}_{2}}{{A}_{1}}{{B}_{1}}{{A}_{2}} \right)}.
\end{aligned}\]
}
\end{corollary}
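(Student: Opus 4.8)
The plan is to derive this corollary as the special case $n=2$ of Theorem~\ref{Thm_r_2}, followed by an explicit computation of the numerical radius of the resulting $2\times 2$ matrix; this is the exact analogue of the computation that passes from Theorem~\ref{6} to Corollary~\ref{Cor_2}.

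First I would apply Theorem~\ref{Thm_r_2} with $n=2$ and $\mathbb{H}_1=\mathbb{H}_2=\mathbb{H}$, obtaining
\[
r\left(A_1B_1+A_2B_2\right)\le\omega\left(\left[\begin{matrix}\lambda_{11}&\lambda_{12}\\ 0&\lambda_{22}\end{matrix}\right]\right),
\]
where $\lambda_{11}=\omega\left(B_1A_1\right)$, $\lambda_{22}=\omega\left(B_2A_2\right)$, and
\[
\lambda_{12}=\sqrt{\omega\left(\left|B_1A_2\right|+i\left|A_1^{*}B_2^{*}\right|\right)\omega\left(\left|A_2^{*}B_1^{*}\right|+i\left|B_2A_1\right|\right)+\frac{1}{2}\left\|\left|B_1A_2\right|^{2}+\left|A_1^{*}B_2^{*}\right|^{2}\right\|+\omega\left(B_2A_1B_1A_2\right)}.
\]
Since $\lambda_{11},\lambda_{22}\ge 0$ and $\lambda_{12}\ge 0$ (being a nonnegative square root), the matrix $\left[\lambda_{ij}\right]_{i,j=1}^{2}$ has nonnegative entries.

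Next I would invoke the result of \cite{Goldberg_LAA_1975} that $\omega(M)=\|\Re(M)\|$ for every matrix $M$ with nonnegative entries. Here
\[
\Re\left(\left[\begin{matrix}\lambda_{11}&\lambda_{12}\\ 0&\lambda_{22}\end{matrix}\right]\right)=\left[\begin{matrix}\lambda_{11}&\frac{\lambda_{12}}{2}\\ \frac{\lambda_{12}}{2}&\lambda_{22}\end{matrix}\right]
\]
is a real symmetric $2\times 2$ matrix with nonnegative entries, so its operator norm equals its largest eigenvalue, namely
\[
\frac{\lambda_{11}+\lambda_{22}}{2}+\sqrt{\left(\frac{\lambda_{11}-\lambda_{22}}{2}\right)^{2}+\left(\frac{\lambda_{12}}{2}\right)^{2}}=\frac{\lambda_{11}+\lambda_{22}}{2}+\frac{1}{2}\sqrt{\left(\lambda_{11}-\lambda_{22}\right)^{2}+\lambda_{12}^{2}}.
\]
Inserting the values of $\lambda_{11},\lambda_{22}$ and of $\lambda_{12}^{2}$ recorded above yields precisely the inequality claimed in the corollary.

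I do not expect any genuine obstacle: the substantive work is already contained in Theorem~\ref{Thm_r_2}, and what remains is the routine diagonalization of a symmetric $2\times 2$ matrix. The only steps meriting a line of care are the verification that every entry of $\left[\lambda_{ij}\right]$ is nonnegative (which licenses the identity $\omega(M)=\|\Re(M)\|$) and the bookkeeping of the factor $\frac{1}{2}$ inside the radical when passing from $\lambda_{12}^{2}$ to the displayed radicand.
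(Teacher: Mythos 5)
Your proposal is correct and follows the same route the paper intends: specialize Theorem~\ref{Thm_r_2} to $n=2$, note the resulting matrix $[\lambda_{ij}]$ has nonnegative entries so that $\omega([\lambda_{ij}])=\|\Re[\lambda_{ij}]\|$ by the Goldberg--Tadmor--Zwas result already invoked in the paper, and read off the largest eigenvalue of the symmetric $2\times 2$ matrix. The eigenvalue computation and the bookkeeping of $\lambda_{12}^{2}$ under the radical are both accurate, so nothing is missing.
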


\section*{Declarations}

\subsection*{Ethical approval}
This declaration is not applicable.
\subsection*{Conflict of interest}
All authors declare that they have no conflicts of interest.
\subsection*{Authors' contribution}
The authors have contributed equally to this work.
\subsection*{Funding}
The authors did not receive any funding to accomplish this work.
\subsection*{Availability of data and materials}
This declaration is not applicable.

%########################

\vskip 0.3 true cm 	

{\tiny (M. Sababheh) Department of Basic Sciences, Princess Sumaya University for Technology, Amman, Jordan
	
\textit{E-mail address:} sababheh@yahoo.com}

\vskip 0.3 true cm 	

{\tiny (H. R. Moradi) Department of Mathematics, Ma.C., Islamic Azad University, Mashhad, Iran 
	
\textit{E-mail address:} hrmoradi@mshdiau.ac.ir}

\end{document}